\documentclass[preprint,12pt]{elsarticle}


\usepackage{amssymb}
 \usepackage{graphicx}
\usepackage{epsfig}


\usepackage{amsthm}

\usepackage{latexsym}          

\usepackage{psfrag}             


\usepackage{amsmath}            

\usepackage{amsfonts,amssymb}   
\usepackage[usenames]{color}
\usepackage{latexsym}          
\usepackage{graphicx}             
\usepackage{psfrag}             

\usepackage{colordvi}
\usepackage{srcltx}






\newcommand{\sysn}{\left\{\begin{array}{rcl}}
\newcommand{\sysk}{\end{array}\right.}

\newtheorem{theorem}{Theorem}[section]

\theoremstyle{example}
\newtheorem{example}[theorem]{Example}

\newtheorem{proposition}[theorem]{Proposition}
\theoremstyle{definition}
\newtheorem{definition}[theorem]{Definition}
\newtheorem{remark}[theorem]{Remark}
\newtheorem{corollary}[theorem]{Corollary}

\journal{...}

\begin{document}

\begin{frontmatter}

\title{Compact condensations of Hausdorff spaces}

\author{Vitalii I. Belugin}
\ead{belug54@mail.ru}


\address{Krasovskii Institute of Mathematics and Mechanics, 620219, Yekaterinburg, Russia}

\author{Alexander V. Osipov}
\ead{OAB@list.ru}


\address{Krasovskii Institute of Mathematics and Mechanics, Ural Federal
 University, \\ Ural State University of Economics, 620219, Yekaterinburg, Russia}

\author{Evgenii G. Pytkeev}
\ead{pyt@imm.uran.ru}


\address{Krasovskii Institute of Mathematics and Mechanics, Ural Federal
 University, 620219, Yekaterinburg, Russia}

\begin{abstract}
 In this paper, we continue to study one of the classic problems in general topology raised by P.S. Alexandrov:
  when a Hausdorff space $X$ has a continuous bijection (a condensation) onto a compactum? We concentrate on the
situation when not only $X$ but also $X\setminus Y$ can be
condensed onto a compactum whenever the cardinality of $Y$ does
not exceed certain $\tau$.


\end{abstract}

\begin{keyword}
$a_{\tau}$-space \sep subcompact space \sep continuous
decomposition \sep weakly dyadic compact \sep condensation

\MSC[2010]  54C10 \sep 54D30
\end{keyword}

\end{frontmatter}

\section*{Introduction}

   The question of when each space from class $\mathcal{A}$ admits a continuous bijection (such map
is called a condensation) onto some space from class $\mathcal{B}$
is one of the natural questions in general theory, the subject of
which is the study of relations between classes of spaces,
performed by various types of mappings.

 In 1937 S. Banach posed a problem which can be formulated
 equivalently: when can a metric space have a condensation onto a
 compact metric space? Independently, the following more general question
 is attributed to P.S. Alexandrov: when a Hausdorff
space $X$ has a condensation onto a compactum?

It is natural to call such spaces as {\it subcompact} spaces. One
of the first and strong results were obtained by M. Katetov
\cite{kat}: an $H$-closed Urysohn space is subcompact.

In the future, an active study of Hausdorff spaces which admit a
condensation onto a compactum was continued in the works of I.L.
Raukhvarger \cite{raut}, V.V. Proizvolov \cite{proiz}, A.S.
Parhomenko \cite{parh1,parh2}, Y.M. Smirnov \cite{smir}, N.
Hadzhiivanov \cite{hedg1}, V.K. Bel'nov \cite{bel1}, A.V.
Arhangel'skii \cite{arh1,arh2}, O. Pavlov \cite{arh2}, V.I.
Belugin \cite{Belug0,Belug1}, E.G. Pytkeev
\cite{osipyt,Pytkeev3,Pytkeev1,Pytkeev2}, W.Kulpa and M.
Turza$\acute{n}$ski \cite{kt}, H. Reiter \cite{rit}, W.W. Comfort,
A.W. Hager and J. van Mill \cite{chjm} and many other authors.

 The fact that $X\setminus Y$ can be condensed onto a compactum
 for every countable $Y$ was established for metrizable compacta
 by Raukhvarger \cite{raut}, for products of metrizable compacta
 by Proizvolov \cite{proiz}, for diadic compacta by Belugin
 \cite{Belug0}, for weakly diadic compacta (including polyadic and
 centered spaces) by Kulpa and Turzanski \cite{kt}, for zero-dimensional
 first countable compacta by Belugin
 \cite{Belug1}. On the other hand Ponomarev \cite{ArchPonom}
 proved that if we remove from the remainder $\omega^*=\beta
 \omega\setminus \omega$ of the $\check{C}$ech-Stone
 compactification of $\omega$, a countable subset $D$ then
 $\omega^*\setminus D$ has no condensation onto a compactum.

It is well known that any locally compact admits a condensation
onto a compactum (Parhomenko's Theorem) \cite{parh2}. It turned
out that condensations onto a compactum are relatively rare.
  At the same
time, the most promising way of research appeared to be the one
started in \cite{raut}. I.L. Raukhvarger proved that for any
metric compact space $X$ and $C\in [X]^{\omega}$, the space
$X\setminus C$ admits a condensation $P$ onto some compact space
$Y_C$. The condensation $P: X\setminus C \rightarrow Y_C$ is a
quotient map (projection) where the decomposition space $Y_C$ is
obtained from $X$ by identifying the points belonging to the same
member of the decomposition $\mathcal{D}=\{(c,a): c\in C, a\in
A\}\cup \{\{b\}: b\in X\setminus (C\cup A)\}$ for a countable set
$A\subset X\setminus C$. Clearly, the condensation $P$ can be
extended to a continuous map $\widetilde{P}: X \rightarrow Y_C$.
This method
 (continuous decompositions) is quite effective in the study
condensations onto compact spaces. For example, by the method of
continuous decomposition, it was proved that any weakly diadic is
a strictly $a$-space \cite{kt}.

 In this paper, we concentrate on the
situation when not only $X$ but also $X\setminus Y$ can be
condensed onto a compactum whenever the cardinality of $Y$ does
not exceed certain $\tau$.

\section{Main definitions and notation}

In this paper, all considered spaces are assumed to be Hausdorff
topological spaces. We use a quotient space and related concepts.

Let $Y$ be a set. For a space $X$ and a surjection $f : X
\rightarrow Y$ , let $\tau(f)=\{U\subset Y : f^{-1}(U)$ is open in
$X\}$. Then $\tau(f)$ is called the {\it quotient topology} for
$Y$ determined by $f$.  Let $X$ and $Y$ be topological spaces. Let
$f : X \rightarrow Y$ be a surjection. Then $f$ is called a {\it
quotient map} if the topology in $Y$ is exactly $\tau(f)$; that
is, $U$ is open in $Y$ if and only if $f^{-1}(U)$ is open in $X$.
The space $Y$ is called the {\it quotient space} of $X$ by $f$
\cite{enc}.

Let $X$ be a set. Let $\mathcal{D}$ be a {\it decomposition} of
$X$; that is, $\mathcal{D}$ is a cover of $X$ such that any two
distinct members are disjoint. Let $P : X \rightarrow \mathcal{D}$
be the {\it projection} (i.e., $P$ maps each point $x\in X$ to the
unique member of $\mathcal{D}$ containing $x$). Let
$X(\mathcal{D})$ be the space $\mathcal{D}$ having the quotient
topology determined by $P$ (i.e., $\mathcal{D}'\subset
\mathcal{D}$ is open in $X(\mathcal{D})$ if and only if
$P^{-1}(\mathcal{D}')$ is open in $X$). The space $X(\mathcal{D})$
is called the {\it decomposition space} of $X$ by $\mathcal{D}$.
Namely, the decomposition space $X(\mathcal{D})$ is obtained from
$X$ by identifying the points belonging to the same member of
$\mathcal{D}$, and a subset of $X(\mathcal{D})$ is open if and
only if its inverse image by the projection $P$ is open in $X$.
The set $A\subset X$ is called {\it saturated with respect to the
decomposition $\mathcal{D}$}, if $A$ is the union of some set of
elements of $\mathcal{D}$ (i.e. for any element $T\in \mathcal{D}$
if $T\bigcap A\neq \emptyset$, then $T\subseteq A$) \cite{enc}.

 A decomposition $\mathcal{D}$ of a space $X$ is called
{\it continuous} if for any $T\in \mathcal{D}$ and for any open
set $U\supseteq T$ there is an open saturated set $V$ such that
$T\subseteq V\subseteq U$ \cite{alex,mor}.

P.S. Alexandrov and H. Hopf proved that a decomposition
$\mathcal{D}$ of a space $X$ is continuous if and only if the
projection $P: X \rightarrow X(\mathcal{D})$ is a closed map
\cite{alexhopf}. Using this result, it is easy to prove that the
quotient space $X(\mathcal{D})$ of a compact Hausdorff space $X$
is a compact Hausdorff space if and only if the decomposition
$\mathcal{D}$ is continuous and consists of closed subsets of $X$.

In this paper, we  use the following notations: $\omega$ - the
first infinite ordinal,  $\omega_1$ - the first uncountable
ordinal, $\aleph_0$ - the first infinite cardinal number,
$\mathbb{Q}$, $\mathbb{N}$ and $\mathbb{R}$ are, as usual, the set
of rational, natural and real numbers, respectively. For an
arbitrary set $A$ and a cardinal number $\tau$, $[A]^{\leq\tau}$ (
$[A]^{<\tau}$) will denote the set of all subsets of the set $A$
of the cardinality $\leq\tau$ ($<\tau$). A space $X$ is {\it
non-trivial} provided that $|X|>1$.

 In \cite{iliad}(Theorem 10) S. Iliadis proved the following theorem.

\begin{theorem}(S. Iliadis)\label{iliad} Let $\gamma X$ be a
extremally disconnected compactification of a space $X$ such that
the remainder $X^*=\gamma X\setminus X$ of cardinality
$<2^{\mathfrak{c}}$. Then $X$ is not subcompact space.
\end{theorem}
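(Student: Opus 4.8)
The plan is to argue by contradiction: assume $X$ is subcompact, fix a condensation $f\colon X\to K$ onto a compactum $K$, and exploit extremal disconnectedness to extend $f$ over the whole compactification. First I would form the closure $G=\overline{\{(x,f(x)):x\in X\}}$ of the graph of $f$ inside $\gamma X\times K$. Since $f$ is continuous, the only point of $G$ lying over $x\in X$ is $(x,f(x))$, so the first projection $\pi_1\colon G\to\gamma X$ is a perfect surjection that is one-to-one over the dense set $X$, which forces $\pi_1$ to be irreducible. The crucial use of the hypothesis is the rigidity fact that an irreducible perfect preimage of an extremally disconnected compactum is trivial: if $\pi_1$ were not injective, two points of some fibre could be separated by disjoint open sets, and irreducibility would then produce two disjoint open subsets of $\gamma X$ sharing a common point in their closures, contradicting that in an extremally disconnected space disjoint open sets have disjoint closures. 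Hence $\pi_1$ is a homeomorphism and $F=\pi_2\circ\pi_1^{-1}\colon\gamma X\to K$ is a continuous extension of $f$.

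Next I would translate this into the language of continuous decompositions used above. The fibres $\mathcal D=\{F^{-1}(k):k\in K\}$ form a decomposition of the compact Hausdorff space $\gamma X$ with $K=\gamma X(\mathcal D)$, and by the Alexandrov--Hopf criterion quoted above $\mathcal D$ is continuous and consists of closed sets. Because $F$ restricts to a bijection of $X$ onto $K$ and $F(X)=K$, every element of $\mathcal D$ meets $X$ in exactly one point, so each point of the remainder $X^*$ lies in a nondegenerate element; in particular both the number of nondegenerate elements and the total number of ``extra'' points they contain are at most $|X^*|<2^{\mathfrak c}$.

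Now I would bring in the cardinality of $\omega^*$. Since $\gamma X$ is infinite and extremally disconnected it carries a countable relatively discrete subset $D\subseteq X$ which, by extremal disconnectedness, is $C^*$-embedded, so $\overline D^{\gamma X}\cong\beta\omega$. The copy of $\omega^*=\overline D^{\gamma X}\setminus D$ has $2^{\mathfrak c}$ points; as $|X^*|<2^{\mathfrak c}$ the set $A=\omega^*\cap X$ has cardinality $2^{\mathfrak c}$ and is dense in $\omega^*$ (every nonempty clopen trace of $\omega^*$ has $2^{\mathfrak c}$ points, so cannot be swallowed by $X^*$). Since fewer than $2^{\mathfrak c}$ fibres are nondegenerate, the subset $A'\subseteq A$ of points whose $F$-fibre is a singleton again has cardinality $2^{\mathfrak c}$ and is dense in $\omega^*$, and over each such point $F$ is locally one-to-one and open, so $\phi=F|_{\overline D^{\gamma X}}\colon\beta\omega\to\phi(\beta\omega)$ embeds $A'$.

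The main obstacle is the final step: extracting a contradiction from this ``almost injective'' continuous image of $\beta\omega$. The density of $A'$ makes $\phi|_{\omega^*}$ irreducible, and the plan is to show that the remaining identifications -- which are confined to the $<2^{\mathfrak c}$ points of $\omega^*\cap X^*$ -- cannot coexist with a dense set of $2^{\mathfrak c}$ singleton fibres without forcing $2^{\mathfrak c}$ further remainder points, contradicting $|X^*|<2^{\mathfrak c}$. Making this counting rigorous is delicate, since almost-injective quotients of $\beta\omega$ do exist in isolation; the hypotheses must be used through the global constraint that every nondegenerate fibre carries a \emph{unique} point of $X$, together with the rigidity of $\beta\omega$. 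I expect this cardinality-rigidity bookkeeping, rather than the extension step, to be the heart of the argument.
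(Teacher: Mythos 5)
First, a caveat: the paper itself gives no proof of this statement---it is quoted from Iliadis's 1963 paper---so your attempt can only be judged on its own merits. Your opening step is correct and well executed: closing the graph of $f$ in $\gamma X\times K$, noting that the projection onto $\gamma X$ is a perfect surjection that is one-to-one over the dense set $X$ (hence irreducible), and invoking the rigidity of extremally disconnected compacta, you legitimately obtain a continuous extension $F\colon\gamma X\to K$ of $f$. The next observations are also right: every fibre of $F$ contains exactly one point of $X$, all ``extra'' points of fibres lie in $X^{*}$, and there are fewer than $2^{\mathfrak c}$ nondegenerate fibres. But the proof then stops exactly where it has to begin: as you yourself admit, no contradiction is ever derived, only hoped for.

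The gap is not ``delicate bookkeeping''; it is unfillable in the framework you set up, because you never use anything about $X^{*}$ beyond $|X^{*}|<2^{\mathfrak c}$, and under that hypothesis alone the statement is false. Take $X=\beta\omega\setminus\{p\}$ with $p\in\omega^{*}$: then $\beta\omega$ is an extremally disconnected compactification of $X$ with one-point remainder, yet $X$ is locally compact and hence subcompact by Parhomenko's theorem, quoted and used in this very paper (concretely, re-topologize $X$ by adjoining the co-compact filter to the neighbourhood filter of one point $a\in X$; the resulting space is compact Hausdorff, and the extension $F$ simply puts $p$ and $a$ in one fibre). In this example every structure you build exists---the copy of $\beta\omega$ over a discrete $D\subseteq X$, the dense set $A'$ of singleton fibres, the irreducibility of $\phi$---so that configuration is consistent and can never yield a contradiction; note also that nothing forces the nondegenerate fibres to interact with a $\beta\omega$ spanned by points of $X$. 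Iliadis's theorem genuinely concerns infinite remainders (as in the paper's application, where $C\in[X]^{\omega}$), and infiniteness is what must be used: since infinite closed subsets of a compact $F$-space contain copies of $\beta\omega$, an infinite $X^{*}$ of size $<2^{\mathfrak c}$ cannot be closed and all fibres of $F$ are finite. The working conclusion pairs $X^{*}$ with $X$: choose a countable relatively discrete $D=\{d_n\}\subseteq X^{*}$ meeting pairwise distinct fibres, let $p_n$ be the unique $X$-point of the fibre of $d_n$, and thin out so that $P=\{p_n\}$ is also relatively discrete. For every free ultrafilter $\mathcal U$, continuity gives that $\lim_{\mathcal U}d_n$ and $\lim_{\mathcal U}p_n$ lie in one fibre; both limit maps are injective (countable relatively discrete sets in compact $F$-spaces are $C^{*}$-embedded), so at most $|X^{*}|<2^{\mathfrak c}$ of each family of limits can lie in $X^{*}$; hence for all $\mathcal U$ outside a small open subset of $\omega^{*}$---which must therefore be empty---both limits lie in $X$ and, sharing a fibre, coincide. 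Thus $\overline{D}$ and $\overline{P}$ share their whole nonempty remainder, contradicting the fact that $D\cup P$ is countable, relatively discrete, hence $C^{*}$-embedded, which forces $\overline{D}\cap\overline{P}=\emptyset$. This pairing argument, which your sketch does not contain and which requires the hypothesis your sketch never uses, is the heart of the proof.
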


We use the particular case of this theorem: Let $X$ be an
extremally disconnected compact space without isolated points and
$C\in [X]^{\omega}$. Then $X\setminus C$ is not subcompact space.

\medskip

Let $\tau$ be an infinite cardinal number.

$\bullet$ A compact space $X$ is called an {\it $a_{\tau}$-space}
provided that for each $C\in [X]^{\leq\tau}$ there is a
condensation from $X\setminus C$ onto a compactum \cite{bop}.

In particular, for $\tau=\aleph_0$, a compact space $X$ is called
{\it an $a$-space} \cite{Belug1}.
\medskip

$\bullet$ A compact space $X$ is called an {\it strictly
$a_{\tau}$-space} provided that for each $C\in [X]^{\leq\tau}$
there is a condensation $f$ from $X\setminus C$ onto a compact
space $Y$ that $f$ can be extended to a continuous map
$\widetilde{f}: X \rightarrow Y$ \cite{bop}.

\medskip

Note that any strictly $a_{\tau}$-space is an $a_{\tau}$-space and
is a strictly $a_{\eta}$-space for any $\eta\leq \tau$. For
$\tau=\aleph_0$, a compact space $X$ is called a {\it strictly
$a$-space} \cite{Belug1}.

\medskip

 A natural extension of the classes of $a_{\tau}$- and strictly
$a_{\tau}$-spaces are classes of (strictly) $a_{\tau}$-subcompact
and almost (strictly) $a_{\tau}$-subcompact spaces.

\medskip

\begin{definition} Let $\tau$ be an infinite cardinal number.

$\bullet$ A space $X$ is called {\it $a_{\tau}$-subcompact}
provided that for each $C\in [X]^{\leq\tau}$ there is a
condensation from $X\setminus C$ onto an $a_{\tau}$-space.

$\bullet$ A space $X$ is called {\it strictly
$a_{\tau}$-subcompact} provided that for each $C\in
[X]^{\leq\tau}$ there is a condensation $f$ from $X\setminus C$
onto a strictly $a_{\tau}$-space $Y$ that $f$ can be extended to a
continuous map $\widetilde{f}: X \rightarrow Y$.

$\bullet$ A space $X$ is called {\it almost $a_{\tau}$-subcompact}
provided that for each $C\in [X]^{\leq\tau}$ there is a
condensation from $X\setminus C$ onto a compact space.

$\bullet$ A space $X$ is called {\it almost strictly
$a_{\tau}$-subcompact} provided that for each $C\in
[X]^{\leq\tau}$ there is a condensation $f$ from $X\setminus C$
onto a compact space $Y$ that $f$ can be extended to a continuous
map $\widetilde{f}: X \rightarrow Y$.
\end{definition}

Note that for an arbitrary $\tau\geq \aleph_0$, the following
implications are true:

$$
\begin{array}{cccccc} \text{strictly }a_{\tau}\text{-space}&\longrightarrow &
a_{\tau}\text{-space}  \\
\downarrow &     &\downarrow &  \\
\text{strictly }a_{\tau}\text{-subcompact space}&\longrightarrow &
a_{\tau}\text{-subcompact space}
\\
  \downarrow &  &\downarrow & \\
 \text{almost strictly }a_{\tau}\text{-subcompact space} &\longrightarrow & \text{almost
 }a_{\tau}\text{-subcompact space}
\end{array}
$$

\begin{center} Diagram 1.
\end{center}

Further, we prove the strictness of all the implications in the
Diagram 1.

\begin{proposition} Let $X$ be an (strictly)
$a_{\tau}$-subcompact space and $C\in [X]^{\leq\tau}$. Then
$X\setminus C$ is an (strictly) $a_{\tau}$-subcompact space.
\end{proposition}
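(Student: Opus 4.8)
The plan is to reduce the statement for $X\setminus C$ directly to the hypothesis on $X$, using two elementary observations: a set identity and the arithmetic of infinite cardinals. Fix an arbitrary $D\in [X\setminus C]^{\leq\tau}$; we must produce the required condensation of $(X\setminus C)\setminus D$. First I would set $E=C\cup D$ and observe that
$$
(X\setminus C)\setminus D = X\setminus(C\cup D) = X\setminus E .
$$
Since $\tau$ is an infinite cardinal, $|E|\le |C|+|D|\le \tau+\tau=\tau$, so $E\in [X]^{\leq\tau}$. This is the only numerical input needed, and it is where the assumption that $\tau$ is infinite is used.

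For the non-strict case, I would simply apply the $a_{\tau}$-subcompactness of $X$ to the set $E\in [X]^{\leq\tau}$: it yields a condensation $f$ from $X\setminus E$ onto an $a_{\tau}$-space $Y$. By the displayed identity, $X\setminus E=(X\setminus C)\setminus D$, so $f$ is already a condensation of $(X\setminus C)\setminus D$ onto an $a_{\tau}$-space. As $D$ was arbitrary, $X\setminus C$ is $a_{\tau}$-subcompact.

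For the strict case, I would apply strict $a_{\tau}$-subcompactness of $X$ to $E$, obtaining a condensation $f$ from $X\setminus E$ onto a strictly $a_{\tau}$-space $Y$ together with a continuous extension $\widetilde{f}\colon X\to Y$. The one point requiring a moment's care is that the definition of strict $a_{\tau}$-subcompactness \emph{for the space $X\setminus C$} asks for an extension over $X\setminus C$, not over $X$. This is handled by restriction: put $g=\widetilde{f}|_{X\setminus C}\colon X\setminus C\to Y$. Since $X\setminus E\subseteq X\setminus C$ and $g$ agrees with $f$ there, $g$ is a continuous extension of the condensation $f$ of $(X\setminus C)\setminus D=X\setminus E$ onto the strictly $a_{\tau}$-space $Y$. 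Thus $g$ witnesses strict $a_{\tau}$-subcompactness of $X\setminus C$ for the set $D$, and again $D$ was arbitrary. There is no genuine obstacle here beyond correctly bookkeeping the ambient space in the strict definition; the whole argument rests on the identity $(X\setminus C)\setminus D=X\setminus(C\cup D)$ and the closure of $[X]^{\leq\tau}$ under finite (indeed $\tau$-indexed) unions.
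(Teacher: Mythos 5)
Your proof is correct: the identity $(X\setminus C)\setminus D=X\setminus(C\cup D)$, the closure of $[X]^{\leq\tau}$ under unions of size $\leq\tau$, and (in the strict case) restricting the extension $\widetilde{f}\colon X\to Y$ to $X\setminus C$ are exactly what is needed, and you handle the one delicate point — that the strict definition for $X\setminus C$ demands an extension over $X\setminus C$ rather than over $X$ — correctly. The paper states this proposition without proof, evidently regarding it as immediate, and your argument is precisely the intended one.
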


In particular, a space $X\setminus C$ is an (strictly)
$a_{\tau}$-subcompact space  for any (strictly) $a_{\tau}$-space
$X$ and $C\in [X]^{\leq\tau}$.

\begin{proposition} There exists an (strictly) $a$-subcompact
space $X$ such that it is not homeomorphic to $Y\setminus C$ where
$Y$ is an (strictly) $a$-space and $C\in [Y]^{\leq\omega}$.
\end{proposition}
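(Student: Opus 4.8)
The plan is to isolate a topological–completeness invariant possessed by \emph{every} space of the form $Y\setminus C$ and then to exhibit a (strictly) $a$-subcompact space that lacks it. The key observation is that if $Y$ is an $a$-space (hence a compactum) and $C=\{c_n:n\in\omega\}\in[Y]^{\le\omega}$, then each singleton $\{c_n\}$ is closed, so $Y\setminus C=\bigcap_n(Y\setminus\{c_n\})$ is a $G_\delta$-subset of the compact Hausdorff space $Y$. A $G_\delta$-subspace of a compactum is \v{C}ech-complete, and \v{C}ech-completeness is a homeomorphism invariant; hence every $Y\setminus C$ is \v{C}ech-complete, for $Y$ an $a$-space and, a fortiori, for $Y$ a strictly $a$-space. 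Consequently it will suffice to produce a single strictly $a$-subcompact space $X$ that is not \v{C}ech-complete: such an $X$ is automatically $a$-subcompact, and it cannot be homeomorphic to any $Y\setminus C$ in either reading of the statement.

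For the construction I would start from a crowded metric compactum $K$ (for concreteness the Cantor set $2^\omega$), fix a countable dense set $D\subseteq K$, put $E:=K\setminus D$, and let $X$ be the same underlying set equipped with the finer (Michael-type) topology in which every point of $E$ is declared isolated while the points of $D$ retain their $K$-neighbourhoods. Then $X$ is Hausdorff and zero-dimensional, so it is Tychonoff; the standard clopen subsets of $K$ stay clopen in $X$, and each $\{e\}$ with $e\in E$ is clopen. Since $E$ is open, $D$ is closed in $X$, and the subspace topology $D$ inherits from $X$ coincides with that inherited from $K$; as $D$ is a countable metrizable space without isolated points, $D\cong\mathbb{Q}$ by Sierpi\'nski's theorem. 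Because $\mathbb{Q}$ is not \v{C}ech-complete and \v{C}ech-completeness passes to closed subspaces, $X$ is not \v{C}ech-complete.

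It remains to verify that $X$ is strictly $a$-subcompact, which I expect to be the main obstacle, since for each $C\in[X]^{\le\omega}$ one must produce a condensation of $X\setminus C$ onto a strictly $a$-space that, in addition, extends continuously over all of $X$. The identity $\mathrm{id}\colon X\to K$ is a condensation (the topology of $X$ is finer than that of $K$). By Raukhvarger's theorem, recalled in the Introduction, the metric compactum $K$ is a strictly $a$-space: for each $C$ there is a continuous-decomposition condensation $P\colon K\setminus C\to Y_C$ onto a compactum $Y_C$ extending to a continuous $\widetilde P\colon K\to Y_C$, and because the projection $P$ is closed with finite fibres, $Y_C$ has countable weight and is therefore again a metric compactum, hence a strictly $a$-space. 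Composing, $\widetilde P\circ\mathrm{id}\colon X\to Y_C$ is continuous, and its restriction to $X\setminus C$ is a condensation onto $Y_C$; this exhibits $X$ as strictly $a$-subcompact and completes the argument.

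The delicate point to get right is precisely the strictly-case bookkeeping: one must ensure that the target $Y_C$ is a strictly $a$-space (not merely compact) and that the extension is defined over all of $X$ rather than only over $X\setminus C$. Both are secured by routing the construction through the metric compactum $K$, whose co-countable condensations land on metric compacta and extend over all of $K$. For the non-strict reading alone the argument is lighter, because any space condensing onto an $a$-space is already $a$-subcompact: restricting the condensation $h\colon X\to K$ to $X\setminus C$ gives a condensation onto $K\setminus h(C)$, which is $a$-subcompact by the preceding Proposition, and composing with the condensation it supplies yields a condensation of $X\setminus C$ onto an $a$-space. Thus the same space $X$ simultaneously witnesses strictness for both implications targeted by the statement.
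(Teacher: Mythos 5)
Your proof is correct, but it takes a genuinely different route from the paper's, and it is worth recording how the two differ. Both arguments rest on the same obstruction that you isolate: if $Y$ is compact Hausdorff and $C\in[Y]^{\leq\omega}$, then $Y\setminus C=\bigcap_n\bigl(Y\setminus\{c_n\}\bigr)$ is a $G_\delta$-set in a compactum, hence $\check{C}$ech-complete. The paper, however, packages this descriptively: its witness $X$ is a Borel subset of a Polish space of Borel order higher than the first; every co-countable subspace of $X$ is then Borel and non-$\sigma$-compact, hence condenses onto $\mathbb{I}^{\aleph_0}$ by Pytkeev's theorem \cite{pytk}, while a homeomorphism of $X$ with some $Y\setminus C$ would force $X$ to be Polish ($\check{C}$ech-complete plus separable metrizable) and therefore of Borel order $1$, a contradiction. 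You instead build a Michael-line-type refinement $X$ of the Cantor set $K$: isolating the points of $K\setminus D$ for a countable dense $D\subset K$ leaves a closed copy of $\mathbb{Q}$ inside $X$ (Sierpi\'nski \cite{Serp}), which destroys $\check{C}$ech-completeness, while the identity condensation $X\to K$ lets you import Raukhvarger's decomposition condensations \cite{raut} of $K\setminus C$; their targets are Hausdorff continuous images of $K$, hence metric compacta and so strictly $a$-spaces, and they extend continuously over $K$ and therefore over $X$, giving strict $a$-subcompactness directly. What each approach buys: the paper's example is separable metrizable (indeed Borel), at the price of descriptive set theory (Borel sets of high order, Pytkeev's condensation theorem, the $G_\delta$-characterization of Polish subspaces); yours is non-metrizable but more elementary and self-contained, and the $\check{C}$ech-completeness invariant works without assuming any metrizability of $X$, which also makes the extension bookkeeping in the strictly case transparent. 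One small point to patch: Raukhvarger's theorem as quoted in the paper treats countably infinite $C$, so for finite $C$ you should note that the same pairing decomposition works trivially (continuity of a decomposition with finitely many two-point classes is automatic), ensuring that metric compacta are strictly $a$-spaces for all $C\in[K]^{\leq\omega}$ and not only for infinite $C$.
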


\begin{proof} In \cite{pytk}, it was proved that any
non-$\sigma$-compact Borel subset of a Polish space admits a
condensation onto a metric compact space. Consider any Borel
subset $X$ of a Polish space that has a Borel order higher than
the first. Since,  for any $S\in [X]^{\leq\omega}$, the space
$X\setminus S$ is Borel not $\sigma$-compact subset of a Polish
space , by Theorem 1 in \cite{pytk}, $X\setminus S$ admits a
condensation onto $\mathbb{I}^{\aleph_0}$. Hence, $X$ is an
(strictly) $a$-subcompact space.

Assume that $X$ is homeomorphic to $Y\setminus C$ where $Y$ is an
(strictly) $a$-space and $C\in [Y]^{\leq\omega}$. Then $Y\setminus
C$ is a $G_{\delta}$-set in $Y$ and, hence, $Y\setminus C$ is a
Polish space. It follows that $X$ is Polish and it has a Borel
order equal to $1$.
\end{proof}

\begin{theorem} There is an almost strictly $a_{\tau}$-subcompact
space which is not $a_{\tau}$-subcompact.
\end{theorem}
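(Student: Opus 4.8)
The plan is to produce the example as a \emph{non-compact} space, and the first thing I would record is the reduction that forces this. If $X$ is compact, then a condensation $f\colon X\setminus C\to Y$ is, in the case $C=\emptyset$, a continuous bijection of a compactum onto a Hausdorff space and hence a homeomorphism; more to the point, for compact $X$ the quantified conditions defining ``almost strictly $a_{\tau}$-subcompact'' and ``strictly $a_{\tau}$-space'' are literally the same, and by Diagram~1 every strictly $a_{\tau}$-space is an $a_{\tau}$-space and hence $a_{\tau}$-subcompact. So a compact witness is impossible, and I instead look for a non-compact $X$ such that (i) after deleting any $C\in[X]^{\le\tau}$ the space $X\setminus C$ condenses onto a compactum by a map that extends continuously to all of $X$, yet (ii) for some $C_{0}\in[X]^{\le\tau}$ (most naturally $C_{0}=\emptyset$) the space $X\setminus C_{0}$ admits no condensation onto an $a_{\tau}$-space.

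For the source of non-$a_{\tau}$-ness I would start from an extremally disconnected compact space $K$ without isolated points. By the particular case of Theorem~\ref{iliad}, $K\setminus D$ is not subcompact for every countable $D$, so $K$ is not an $a_{\tau}$-space for any $\tau\ge\aleph_{0}$. I would then assemble $X$ out of copies of $K$ together with an auxiliary locally compact ``filling'' part (isolated points, convergent sequences, or additional compact copies), arranged so that $X$ is non-compact but every deleted subspace stays condensable: the holes created in a copy of $K$ by deleting points of $C$, or by deleting the limit points of $C$, are to be absorbed by images of the auxiliary part.

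For part (i) I would produce, for each $C$, an explicit continuous decomposition $\mathcal{D}_{C}$ of a compactification $bX$ of $X$ in the spirit of Raukhvarger's method described in the Introduction, identifying each deleted point and each limit point of $C$ with a point of a suitable auxiliary set so that the projection $P$ is a closed map. By the Alexandrov--Hopf criterion quoted above, $\mathcal{D}_{C}$ is then continuous and the decomposition space is compact Hausdorff; the restriction of $P$ to $X\setminus C$ is the required condensation, while $P$ restricted to $X$ furnishes the continuous extension for free (this is exactly why the continuous-decomposition approach yields the ``strictly'' clause automatically). Parhomenko's theorem applied to the locally compact filling part guarantees that such decompositions exist for every $C\in[X]^{\le\tau}$, giving that $X$ is almost strictly $a_{\tau}$-subcompact.

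The decisive step, and the one I expect to be the main obstacle, is part (ii): no condensation $g\colon X\setminus C_{0}\to Y$ onto a compactum may land on an $a_{\tau}$-space. The naive attempt—plant a clopen copy $K_{0}\cong K$ in $X\setminus C_{0}$ and argue that $Y$ inherits the badness—fails because of an \emph{absorption} phenomenon: to witness that $Y$ is not an $a_{\tau}$-space one must delete a countable $C'\subseteq Y$ and show $Y\setminus C'$ is not subcompact, but the image $g(K_{0})$ need not be clopen, and the holes produced in $g(K_{0})$ by deleting $C'$ can be filled in by the images of the complementary compact part $g\bigl((X\setminus C_{0})\setminus K_{0}\bigr)$, so $Y\setminus C'$ may well be subcompact and $Y$ may be an $a_{\tau}$-space. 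Hence the construction of $X$ and the choice of $C_{0}$ must be tuned so that the extremally disconnected core of every image $Y$ is \emph{un-absorbable}: some countable $C'\subseteq Y$ must leave $Y\setminus C'$ possessing an extremally disconnected compactification with remainder of cardinality $<2^{\mathfrak{c}}$, so that Theorem~\ref{iliad} applies to $Y$ itself and certifies that $Y$ is not an $a_{\tau}$-space. Reconciling this rigidity of the core (needed for (ii)) with the flexibility of hole-filling (needed for (i)) is the heart of the argument, and the whole construction must be arranged precisely to keep these two opposing demands compatible.
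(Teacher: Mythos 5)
Your outline matches the paper's construction in spirit---the paper's witness is exactly of the shape you describe, $X=D\bigoplus K$ with $D$ discrete of cardinality $\tau^+$ (the ``filling'' part) and $K$ extremally disconnected compact without isolated points, and its part (i) is done by letting spare points of $D$ fill the holes deleted from $K$ while the rest of $D$ is one-point compactified---but your proposal stops precisely at the step that constitutes the proof. You state that the construction ``must be tuned so that the extremally disconnected core of every image $Y$ is un-absorbable'' without saying what tuning achieves this; that tuning is the whole content of part (ii), and it is a cardinality mechanism you are missing. The paper takes $|K|=(2^{2^{\tau^+}})^+$. Then for any condensation $f:X\rightarrow T$ onto a compactum (it suffices to treat $C_0=\emptyset$), the set $\overline{f(D)}$ has a dense subset of cardinality $\leq\tau^+$, hence $|\overline{f(D)}|\leq 2^{2^{\tau^+}}<|K|$, so $f(K)$ (which is homeomorphic to $K$, hence zero-dimensional and without isolated points) is not contained in $\overline{f(D)}$ and contains a nonempty set $W$, clopen in $f(K)$, with $W\subset T\setminus\overline{f(D)}$. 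Since $T=f(D)\cup f(K)$, such a $W$ is clopen in $T$, so $T\setminus W$ is compact. This is exactly what defeats absorption: if $S\subset W$ is countable and $g:T\setminus S\rightarrow B$ is a condensation onto a compactum, then $g(T\setminus W)$ is compact, so $g(W\setminus S)=B\setminus g(T\setminus W)$ is locally compact and, by Parhomenko's theorem, condenses onto a compactum; hence $W\setminus S$ is subcompact, contradicting Theorem \ref{iliad} applied to the extremally disconnected compactum $W$ with countable remainder $S$. Note also that your plan to apply Theorem \ref{iliad} ``to $Y$ itself'' cannot work as stated: $Y$ contains the image of the filling part and is not extremally disconnected, so one must first isolate the clopen core $W$ and then use the compactness of $T\setminus W$ plus Parhomenko to transfer the contradiction; without the cardinality gap there is no reason such a clopen $W$ avoiding $\overline{f(D)}$ exists at all.

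A secondary remark on part (i): the route you propose there (pair each deleted point of $K$ with an auxiliary isolated point and take the quotient of a compactification $bX$ by this decomposition) does not work for the natural choices of $bX$. Over the one-point compactification, the saturation of the closure of an infinite set $F_0$ of auxiliary points is $F_0\cup\{\infty\}\cup A\cup\psi(F_0)$, whose closure picks up limit points of $\psi(F_0)$ in $K$ that lie in no element of the saturation (an infinite subset of an extremally disconnected compactum is never a countable compactum, so $\psi(F_0)$ is not closed); thus the projection is not closed and, by the Alexandrov--Hopf criterion you cite, the quotient is not Hausdorff. The paper avoids decompositions entirely: it builds the target compactum $D_1^*\bigoplus K$ directly, maps the spare isolated points onto the deleted part of $K$ by a bijection $\psi$, sends the deleted isolated points to a fixed point of $K$, and verifies continuity by hand; if you insist on decompositions you must work over $\beta X$ and allow elements to contain points of the growth $\beta D\setminus D$. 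This defect of (i) is repairable; the missing mechanism in (ii) is the genuine gap.
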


\begin{proof} Consider $X=D\bigoplus K$, where $D$ is a
discrete set of cardinality $\tau^+$ and $K$ is an extremally
disconnected compact space without isolated points of cardinality
$(2^{2^{\tau^+}})^+$. We prove that $X$ is almost strictly
$a_{\tau}$-subcompact. Let $E\in [X]^{\leq \tau}$. Put $A=E\cap D$
and $B=E\cap K$. Choose $C\subset D\setminus A$ such that $|C| =
|B|$. Denote $D_1=D\setminus (A\cup C)$. Let $d_0\in D_1$ and let
$D_1^*$ be a one-point compactification of $D_1\setminus \{d_0\}$,
where $d_0$ is not-isolated point of the compact space $D_1^*$.
Put $X_1=D_1^*\cup K$. Let $\psi$ be a bijection between $C$ and
$B$. Construct a condensation $\varphi: X\setminus (A\cup
B)\rightarrow D_1^*\cup K$ by the following rule

$$
\varphi(x)=
\begin{cases}
\psi(x), & x\in C,\\
x,       & x\in (K\setminus B)\cup D_1.
\end{cases}
$$

The continuous function $\varphi$ be extended to the continuous
function $f: X\rightarrow D_1^*\cup K$ where for fix $k\in K$ $$
f(x)=
\begin{cases}
\psi(x), & x\notin (A\cup B),\\
x,       & x\in B,\\
k,       & x\in A.
\end{cases}
$$
 Note that $X$ cannot be condensed onto an $a_{\tau}$- space.
 Suppose $f: X \rightarrow T$ is a condensation from the space $X$ onto an $a_{\tau}$-space $T$.
 Since $|\overline{f(D)}|\leq 2^{2^{\tau^+}}$, there is $W\subset T\setminus
 \overline{f(D)}$ such that $W$ is open-closed in $f(K)$ ($f(K)$ is homeomorphic to $K$).
Hence, $W$ is an extremally disconnected compact space without
isolated points. Let $S$ be a countable subset of $W$. We show
that $T\setminus S$ cannot be condensed onto a compact space.
Indeed, let $g: T\setminus S \rightarrow B$ be a condensation
where $B$ is compact. Then $T\setminus W$ is compact, hence,
$g(T\setminus W)$ is compact, and $B\setminus g(T\setminus W)$ is
locally compact. By Theorem in \cite{parh2}, the space $B\setminus
g(T\setminus W)$ (and, hence, $W\setminus S$) admits a
condensation onto a compactum. This contradicts of Theorem
\ref{iliad}.
\end{proof}

\section{Main results}

\begin{proposition}\label{pr21} Let $X$ admits a condensation onto a strictly
$a_{\tau}$-space for some $\tau\geq \aleph_0$. Then $X$ is
strictly $a_{\tau}$-subcompact.
\end{proposition}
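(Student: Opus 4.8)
The plan is to reduce the assertion to the defining property of the strictly $a_{\tau}$-space onto which $X$ condenses, by a single composition of maps, and then to upgrade the resulting compact target to a strictly $a_{\tau}$-space. Fix the given condensation $h\colon X\to Z$ with $Z$ a strictly $a_{\tau}$-space, and let $C\in[X]^{\le\tau}$ be arbitrary. Since $h$ is a bijection, $D:=h(C)\in[Z]^{\le\tau}$ and the restriction $h|_{X\setminus C}\colon X\setminus C\to Z\setminus D$ is again a condensation. Applying the definition of a strictly $a_{\tau}$-space to the set $D$, I obtain a condensation $g\colon Z\setminus D\to Z'$ onto a \emph{compact} space $Z'$ together with a continuous extension $\widetilde g\colon Z\to Z'$. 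Composing, $f:=g\circ h|_{X\setminus C}\colon X\setminus C\to Z'$ is a condensation and $\widetilde f:=\widetilde g\circ h\colon X\to Z'$ is a continuous map with $\widetilde f|_{X\setminus C}=f$. Thus $X\setminus C$ condenses onto $Z'$ and the condensation extends continuously over $X$.

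At this stage the argument has only shown that $X$ is \emph{almost} strictly $a_{\tau}$-subcompact; the essential remaining task is to improve the target $Z'$ from ``compact'' to ``strictly $a_{\tau}$-space'', which is precisely what distinguishes the desired conclusion from the weaker one. The heart of the proof is therefore the claim that the quotient $Z'$ is itself a strictly $a_{\tau}$-space. Here I would exploit the structure of $\widetilde g$: as $Z$ is compact and $Z'$ Hausdorff, $\widetilde g$ is a closed surjection, hence the projection of a continuous decomposition $\mathcal D_0$ of $Z$ (by the Alexandrov--Hopf criterion recalled above). Because $g=\widetilde g|_{Z\setminus D}$ is bijective onto $Z'$, every member of $\mathcal D_0$ meets $Z\setminus D$ in exactly one point and has its remaining points in $D$; in particular $\mathcal D_0$ has at most $\tau$ nontrivial members, each of cardinality $\le|D|\le\tau$, and every fiber of $\widetilde g$ has cardinality $\le\tau$.

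To establish that $Z'$ is strictly $a_{\tau}$, I would take an arbitrary $E\in[Z']^{\le\tau}$ and build a continuous decomposition of $Z'$ whose projection is injective off $E$. The natural route is to pull $E$ back together with the (at most $\tau$) branch points $\widetilde g(D)$: the set $M:=\widetilde g^{-1}(E)\cup D$ lies in $[Z]^{\le\tau}$, so the strict $a_{\tau}$-property of $Z$ yields a continuous decomposition $\mathcal D_1$ of $Z$, with $\le\tau$ small nontrivial members supported on $M$, whose projection condenses $Z\setminus M$ onto a compactum with a continuous extension over $Z$. I would then transport $\mathcal D_1$ across $\widetilde g$ to a decomposition of $Z'$ and check that it witnesses strict $a_{\tau}$-ness for $E$, thereby producing the required condensation of $Z'\setminus E$; feeding this back into the composition of the first paragraph gives the condensation of $X\setminus C$ onto a strictly $a_{\tau}$-space, extendable over $X$, which is what strict $a_{\tau}$-subcompactness of $X$ demands.

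I expect the main obstacle to be exactly this transport of the decomposition across $\widetilde g$. A decomposition of $Z$ delivered by the strict $a_{\tau}$-property need not be saturated with respect to $\mathcal D_0$, so its naive image in $Z'$ may fail to be a decomposition at all, or may merge two points of $Z'\setminus E$ and thus spoil injectivity, and it need not remain continuous. The delicate step is therefore to choose or adjust the $Z$-level witness so that it is compatible with (refines, or is saturated over) $\mathcal D_0$, so that the induced partition of $Z'$ is genuinely a continuous decomposition separating the complement of $E$; verifying that continuity survives the push-forward—again through closedness of the projection via Alexandrov--Hopf—is the crux, while the composition of maps is routine bookkeeping.
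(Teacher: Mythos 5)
Your first paragraph is, modulo notation, the paper's \emph{entire} proof (the paper's $f,\,Y,\,S,\,h,\,K,\,\widetilde h$ are your $h,\,Z,\,C,\,g,\,Z',\,\widetilde g$): restrict the given condensation to $X\setminus C$, apply the defining property of the target to $h(C)$, and compose both the condensations and the extensions. The paper stops there. The way it nevertheless claims the conclusion ``strictly $a_{\tau}$-subcompact'' rather than ``almost strictly $a_{\tau}$-subcompact'' is visible in its opening sentence: it takes $f\colon X\to Y$ to be a condensation onto a strictly $a_{\tau}$-\emph{subcompact} space $Y$. That is, it silently applies the vertical implication of Diagram 1 (every strictly $a_{\tau}$-space is strictly $a_{\tau}$-subcompact) to the hypothesis, after which the witness target $K$ is \emph{by definition} a strictly $a_{\tau}$-space and the composition finishes. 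Note that the paper only asserts the implications of Diagram 1 (``the following implications are true'') and then proves their \emph{strictness}, never the implications themselves; alternatively, if one reads ``subcompact'' in that sentence as a typo for ``space'', the paper's argument proves only almost strict $a_{\tau}$-subcompactness --- exactly the shortfall you point out.

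So you have not missed any idea contained in the paper; there is no further idea there to miss. What you have done is correctly isolate the genuine mathematical content hiding behind the paper's wording: the claim that the compact target $Z'$ can be taken to be a strictly $a_{\tau}$-space, i.e., Diagram 1's unproved implication. Your attempt to prove it is, as you yourself concede, incomplete, and the obstruction you name is real: the witness decomposition $\mathcal{D}_1$ for $M=\widetilde g^{-1}(E)\cup D$ comes from the bare definition of a strictly $a_{\tau}$-space, with no control over its interaction with $\mathcal{D}_0$; for its push-forward to be even a partition of $Z'$ you would need every fiber of $\widetilde g$ to lie inside a single member of $\mathcal{D}_1$, and nothing in the definition lets you choose the witness for $M$ with that compatibility (the two quotient maps need not factor through one another in either direction). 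Hence, judged as a self-contained proof under the paper's literal definitions, your proposal has a gap at precisely that step; but it is a gap inherited from, and shared with, the paper. If you permit yourself to cite Diagram 1 (or the remark following Proposition 1.2) the way the paper implicitly does, your first paragraph alone is already a complete proof, and it is the paper's proof.
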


\begin{proof} Let $f: X \rightarrow Y$ be  a condensation from
a space $X$ onto a strictly $a_{\tau}$-subcompact space $Y$.
 Take any $S\in [X]^{\leq\tau}$. Then there is a condensation $h:
Y\setminus f(S) \rightarrow K$ where $K$ is compact, such that $h$
can be extended to a continuous map $\widetilde{h}: Y \rightarrow
K$. Note that $\widetilde{h}\circ f$ is a continuous extension
over $X$ of the condensation $h\circ (f\upharpoonright (X\setminus
S)): X\setminus S \rightarrow K$.
\end{proof}

In 1970 S.Mr$\acute{o}$wka \cite{mrow} generalized the class of
dyadic spaces defining the class of polyadic spaces (= the
continuous images of the products of the one point
compactifications of discrete spaces).

In paper \cite{kt} W.Kulpa and M.Turza$\acute{n}$ski introduced
the class of weakly dyadic spaces.

Let $T$ be an infinite set. Denote a Cantor cube by

$D^T:=\{p: p:T\rightarrow \{0,1\}\}$. For $s\subset T$ and $p\in
D^T$ we shall use the following notation

$G_s(p):=\{f\in D^T: f\upharpoonright s= p\upharpoonright s$ and
$p^{-1}(0)\subset f^{-1}(0)\}$.

\begin{definition} (\cite{kt})

$\bullet$ A subset $X\subset D^T$ is said to be an {\it
$\omega$-set} iff for each $p\in X$ there exists an $s\subset T$
such that $|s|\leq \omega$ and $G_s(p)\subset X$.

$\bullet$ A space $Y$ is said to be {\it a weakly dyadic space} if
$Y$ is a continuous image of a compact $\omega$-set in $D^T$.

\end{definition}

The class of all weakly diadic spaces contains the class of all
centered spaces in sense of Bell \cite{bell} which in turn,
contains the class of all polyadic spaces. Kulpa and
Turza$\acute{n}$ski proved that a weakly dyadic space is a
strictly $a$-space (Lemma 2 and Theorem in \cite{kt}).



\begin{corollary} Suppose that $X$ admits a condensation
onto a weakly diadic space. Then $X$ is a strictly $a$-space.
\end{corollary}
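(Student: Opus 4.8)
The plan is to obtain the corollary as an application of Proposition \ref{pr21}. By the theorem of Kulpa and Turza\'{n}ski (Lemma~2 and the Theorem of \cite{kt}), every weakly dyadic space is a strictly $a$-space; in particular the target of the given condensation is a strictly $a_{\aleph_0}$-space. Thus $X$ admits a condensation onto a strictly $a_{\aleph_0}$-space, and Proposition \ref{pr21}, specialized to $\tau=\aleph_0$, applies to yield that $X$ is a strictly $a$-space.

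To exhibit the argument concretely, let $f\colon X\to Y$ denote the given condensation, where $Y$ is weakly dyadic and hence, by Kulpa--Turza\'{n}ski, a strictly $a$-space. Fix an arbitrary countable set $C\subset X$. Then $f(C)$ is countable in $Y$, so, $Y$ being a strictly $a$-space, there is a condensation $h\colon Y\setminus f(C)\to K$ onto a compactum $K$ together with a continuous extension $\widetilde h\colon Y\to K$. Since $f$ is a bijection, it restricts to a bijection of $X\setminus C$ onto $Y\setminus f(C)$, and therefore $h\circ\bigl(f\!\upharpoonright\!(X\setminus C)\bigr)\colon X\setminus C\to K$ is again a condensation onto the compactum $K$.

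The step I expect to carry the real weight is verifying that this condensation of $X\setminus C$ extends continuously over all of $X$, for this is precisely what distinguishes the strict notion. Here the composite $\widetilde h\circ f\colon X\to K$ does the job: it is continuous on $X$ and agrees with $h\circ\bigl(f\!\upharpoonright\!(X\setminus C)\bigr)$ on $X\setminus C$, so it is the required extension. Since $C\in[X]^{\le\aleph_0}$ was arbitrary, every complement $X\setminus C$ condenses onto a compactum with a continuous extension over $X$, which is exactly the defining property of a strictly $a$-space; hence $X$ is a strictly $a$-space. The only point demanding care is that the condensation and its continuous extension be produced simultaneously, and this is supplied by composing, with the condensation $f$, the extendable witnesses that come from the strict $a$-structure of $Y$.
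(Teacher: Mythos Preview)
Your approach is exactly the paper's: the corollary sits immediately after Proposition~\ref{pr21} and the Kulpa--Turza\'nski result, and its implicit proof is precisely the composition you spell out. One terminological caveat: Proposition~\ref{pr21} concludes that $X$ is \emph{strictly $a$-subcompact}, not a strictly $a$-space (the latter, by definition, requires $X$ itself to be compact, which a mere condensation onto a compactum does not guarantee); the corollary as printed carries the same slip, and your sentence ``which is exactly the defining property of a strictly $a$-space'' should read ``almost strictly $a$-subcompact'' if taken literally, but the intended conclusion and your route to it match the paper.
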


\begin{theorem} Let $X=Z\bigoplus (\bigoplus \{X_{\alpha} :
\alpha\in A\})$, where $|A|=\tau$,  $X_{\alpha}$ is an $a_{\tau}$-
space, $|X_{\alpha}|>\tau$ for every $\alpha\in A$ and $Z$ is
compact. Then $X$ is almost $a_{\tau}$-subcompact.
\end{theorem}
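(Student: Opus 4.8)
The plan is to peel off the well-behaved summands using the $a_\tau$-hypothesis and then hand-build a compactum that ``fills in'' the points deleted from $Z$ using the leftover summands. First I would split the deleted set along the clopen decomposition of $X$: put $C_Z=C\cap Z$ and $C_\alpha=C\cap X_\alpha$, so that $|C_Z|\le\tau$, $|C_\alpha|\le\tau$, and $X\setminus C=(Z\setminus C_Z)\oplus\bigoplus_{\alpha\in A}(X_\alpha\setminus C_\alpha)$. Since each $X_\alpha$ is an $a_\tau$-space and $C_\alpha\in[X_\alpha]^{\le\tau}$, there is a condensation $f_\alpha\colon X_\alpha\setminus C_\alpha\to K_\alpha$ onto a compactum $K_\alpha$; because $|X_\alpha|>\tau$ each $K_\alpha$ is a nonempty compactum (indeed $|K_\alpha|=|X_\alpha|$). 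Gluing the $f_\alpha$ to the identity on $Z\setminus C_Z$ gives a condensation of $X\setminus C$ onto $(Z\setminus C_Z)\oplus\bigoplus_\alpha K_\alpha$, so since condensations compose it suffices to condense this latter space onto a compactum. The only problematic summand is $Z\setminus C_Z$: it need not be locally compact and, as the Ponomarev example quoted in the introduction shows, may by itself admit no condensation onto a compactum, so the compacta $K_\alpha$ must supply the limit points that $Z$ alone lacks.

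The construction I would use restores the deleted points of $Z$ as images of points taken from the spare summands. Since $|C_Z|\le\tau=|A|$, choose an injection $c\mapsto\alpha_c$ of $C_Z$ into $A$ and, in each dedicated compactum $K_{\alpha_c}$, a point $p_c$. The remaining summands form a locally compact sum $\bigoplus_{\alpha\notin\{\alpha_c\}}K_\alpha$, which by Parhomenko's theorem condenses onto a compactum $N$ that will be a harmless clopen summand of the target. For the dedicated part I would embed everything into a product $Z\times M$, where $M=[0,1]^{\Lambda}$ with $\Lambda=\bigsqcup_{c\in C_Z}\Lambda_c$ a disjoint union of coordinate blocks, one block $\Lambda_c$ of weight $w(K_{\alpha_c})$ per hole; fix a homeomorphic embedding $e_c\colon K_{\alpha_c}\to[0,1]^{\Lambda_c}$ sending $p_c$ to the origin of that block and extend it by $0$ on all other blocks, so that every cap $e_c(K_{\alpha_c})$ passes through the common basepoint $\mathbf{0}\in M$. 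Now define $\Psi\colon (Z\setminus C_Z)\oplus\bigoplus_c K_{\alpha_c}\to Z\times M$ by $\Psi(z)=(z,\mathbf{0})$ for $z\in Z\setminus C_Z$ and $\Psi(k)=(c,e_c(k))$ for $k\in K_{\alpha_c}$. One checks immediately that $\Psi$ is continuous and injective: the first coordinate separates distinct holes from one another and separates every cap from the sheet $Z\setminus C_Z$ (as $c\notin Z\setminus C_Z$), while each $e_c$ is injective on its cap.

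The decisive point is that the image $I=\Psi\big((Z\setminus C_Z)\oplus\bigoplus_c K_{\alpha_c}\big)$ is closed, hence (being a subspace of the compactum $Z\times M$) a compactum. A net running through the sheet converges to some $(z^*,\mathbf 0)$, which lies in $I$ either as a sheet point or, when $z^*\in C_Z$, as $\Psi(p_{z^*})$ (here $e_{z^*}(p_{z^*})=\mathbf 0$). A net staying in finitely many caps converges inside a single compact cap. The remaining case---a net meeting infinitely many distinct caps---is where the disjoint-block design does its work: every basic neighbourhood of $\mathbf 0$ constrains only finitely many coordinates, hence contains all but finitely many caps entirely, so such a net must have second coordinate tending to $\mathbf 0$ and again converges to a point $(z^*,\mathbf 0)\in I$. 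This is exactly the step that neutralises the accumulation of $C_Z$ inside $Z$---the obstruction that, by Ponomarev, blocks condensing $Z\setminus C_Z$ on its own---and it is the part I expect to be the crux of the argument. With $I$ compact, $\Psi$ is a condensation onto $I$; together with the Parhomenko condensation onto $N$ it yields a condensation of $(Z\setminus C_Z)\oplus\bigoplus_\alpha K_\alpha$ onto the compactum $I\sqcup N$, and composing with the first reduction shows that $X\setminus C$ condenses onto a compactum. As $C\in[X]^{\le\tau}$ was arbitrary, $X$ is almost $a_\tau$-subcompact. The hypotheses enter exactly here: $|A|=\tau$ supplies one dedicated summand per hole, and $|X_\alpha|>\tau$ guarantees that, whatever $C$ is, every $K_\alpha$ is a nonempty compactum, so a full supply of caps is always available while their size is absorbed harmlessly by the private coordinate blocks.
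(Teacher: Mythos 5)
Your proof is correct, and it reaches the conclusion by a different technical route than the paper, although the underlying combinatorial idea is the same. Both arguments start with the same reductions (absorb $C\cap X_\alpha$ using the $a_\tau$-property of the summands, dispose of unused summands via Parhomenko's theorem) and both rest on the same device: assign to each deleted point $c\in C\cap Z$ a dedicated summand and a designated point $p_c$ in it that takes over the role of $c$. The difference is in how the glued object is shown to be compact. The paper stays internal: it defines, directly on the set $X\setminus S$, a coarser Hausdorff topology $\tau'$ (basic neighborhoods of $p_\alpha$ absorb neighborhoods of the hole $s_\alpha$, and any neighborhood containing a hole $s_\beta$ picks up the whole summand $X_\beta$), and then verifies compactness by hand, showing every infinite subset has a complete accumulation point. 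You work externally: you realize the glued space as the $\Psi$-image inside the concrete compactum $Z\times[0,1]^{\Lambda}$, with pairwise disjoint coordinate blocks for the caps, and prove that the image is closed. Your version buys automatic Hausdorffness and compactness of the target (a closed subspace of a compactum), at the price of two facts you should make explicit. First, a compactum embeds in a Tychonoff cube \emph{with a prescribed point going to the origin}: this is true, e.g.\ via the diagonal of all continuous maps $K_{\alpha_c}\to[0,1]$ vanishing at $p_c$, which separate points (for $g$ with $g(x)\neq g(y)$, one of $\max(g-g(p_c),0)$, $\max(g(p_c)-g,0)$ separates $x$ from $y$). Second, the closedness of $I$: your trichotomy of nets is not exhaustive as stated (a convergent net may mix sheet and cap terms, and may meet infinitely many caps without doing so cofinally), so it is cleaner to argue about an arbitrary point $(z^*,m^*)\in\overline{I}$ directly. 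If $m^*=\mathbf{0}$, then $(z^*,m^*)\in Z\times\{\mathbf{0}\}\subseteq I$ (this is exactly where $e_c(p_c)=\mathbf{0}$ is used); if instead $m^*(\lambda^*)>0$ for some $\lambda^*\in\Lambda_{c^*}$, then the open set $\{(z,m): m(\lambda^*)>m^*(\lambda^*)/2\}$ misses the sheet and every cap except $T_{c^*}$, so $(z^*,m^*)\in\overline{T_{c^*}}=T_{c^*}$, the caps being compact. With these two points tightened, your argument is complete; the paper's complete-accumulation-point verification is more self-contained, while yours isolates the compactness into a single transparent geometric fact about $Z\times[0,1]^{\Lambda}$.
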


\begin{proof} Let $S\in [X]^{\leq \tau}$. Without loss of
generality we can assume that $S\subseteq Z$. Otherwise, for every
$\alpha\in A$ there is a condensation $\varphi_{\alpha}$ from
$X_{\alpha}\setminus S$ onto a compactum. Then we can consider the
space $Z\bigoplus (\bigoplus \{\varphi(X_{\alpha}\setminus S) :
\alpha\in A\})$ where the restriction $\varphi\upharpoonright
(X_{\alpha}\setminus S)=\varphi_{\alpha}$ for every $\alpha\in A$.

Note that the space $\bigoplus \{X_{\alpha} : \alpha\in T\}$ is
locally compact for any $T\subset A$, and, by Parhomenko's Theorem
\cite{parh2}, it admits a condensation onto a compactum. Thus, if
$|S|<|A|$, then we can consider the space $P=Z\bigoplus (\bigoplus
\{X_{\alpha} : \alpha\in B\})$ where $B\subset A$ and $|B|=|S|$.
Let $S=\{s_{\alpha}: \alpha\in B\}$. Further, we suppose that
$X=P$ and $|S|=|A|$.

We prove that $X\setminus S$ is subcompact. For any $\alpha\in A$
we choose a point $p_{\alpha}\in X_{\alpha}$. On the set
$X\setminus S$ define the topology $\tau'$, topologize $X\setminus
S$ by letting sets:

1. $Vx=(Ox\cup (\cup \{X_{\alpha}: s_{\alpha}\in Ox\}))\cap
(X\setminus S)$ where $Ox$ is a neighborhood of $x$ in $X$ such
that $Ox\cap \{p_{\alpha}:\alpha\in A\}=\emptyset$, be basic
neighborhood of a point $x \in (X\setminus S)\setminus
\{p_{\alpha}: \alpha\in A\}$;

2. $Vp_{\alpha}=(Op_{\alpha}\cup Os_{\alpha})\cup(\cup
\{X_{\beta}: s_{\beta}\in Os_{\alpha}\setminus s_{\alpha}\})\cap
(X\setminus S)$ where $Op_{\alpha}\subseteq X_{\alpha}$ is a
neighborhood of $p_{\alpha}$ in $X$ and  $Os_{\alpha}$ is a
neighborhood of $s_{\alpha}$ in $X$, be basic neighborhood of a
point $p_{\alpha}$ for each $\alpha\in A$.

Note that $(X\setminus S, \tau')$ is a Hausdorff space and
$X\setminus S$ admits a condensation onto $(X\setminus S, \tau')$.
It remains to prove that $(X\setminus S, \tau')$ is compact.

We show that any infinite set $M\subseteq X\setminus S$ has a
complete accumulation point in topology $\tau'$.

Perhaps $|M\cap Z|=|M|$. The set $M$ has a complete accumulation
point $z$ in the compact space $Z$. If $z\notin S$, then $z$ is a
complete accumulation point of $M$ in $(X\setminus S, \tau')$; if
$z\in S$, then $z=s_{\alpha}$ for some $\alpha\in A$. Any basic
neighborhood of $p_{\alpha}$ includes some neighborhood
$Os_{\alpha}$ of $s_{\alpha}$ and, hence, the point $p_{\alpha}$
is a complete accumulation point of $M$ in the topology $\tau'$.

Further, we assume that $|M\cap Z|<|M|$.

 If there is an index $\alpha$ such that $|M\cap
X_{\alpha}|=|M|$, then a complete accumulation point $x$ of the
set $M\cap X_{\alpha}$ will be a complete accumulation point of
$M$ in  $(X\setminus S, \tau')$. If there is no such index, then
$|\{\alpha: M\cap X_{\alpha}\neq \emptyset \}|=|M|$. The set
$\{s_{\alpha}: M\cap X_{\alpha}\neq \emptyset\}$ has the same
cardinality as $M$. The set $\{s_{\alpha}: M\cap X_{\alpha}\neq
\emptyset\}$ has a complete accumulation point $x$ in $Z$. If
$x\in Z\setminus S$, then $x$ will be a complete accumulation
point of $M$ in $(X\setminus S, \tau')$ because $Vx$ includes all
$X_{\alpha}$ for $s_{\alpha}\in Ox$ for any neighborhood $Ox$ of
$x$. If $x=s_{\alpha_0}$, then the basic neighborhood
$Vp_{\alpha_0}$ includes all $X_{\alpha}$ for $s_{\alpha}\in
Os_{\alpha_0}$ for any neighborhood $Os_{\alpha_0}$  and, hence,
$p_{\alpha_0}$ is a complete accumulation point of the set $M$.
\end{proof}

Note that in the previous theorem we also proved the following
proposition.

\begin{proposition} Let $X=Z\bigoplus (\bigoplus \{X_{\alpha} :
\alpha\in A\})$, where  $X_{\alpha}$ is a non-empty compact space
for each $\alpha\in A$ , $|A|=\tau$, $Z$ is compact and $S\in
[Z]^{\leq \tau}$. Then $X\setminus S$ is subcompact.
\end{proposition}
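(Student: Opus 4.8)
The plan is to reproduce, in this more general setting, the condensation constructed in the proof of the preceding theorem, observing that the compactness part of that argument never used either the hypothesis that the $X_\alpha$ are $a_\tau$-spaces or that $|X_\alpha|>\tau$; those were needed only to absorb points of $S$ lying inside the summands, whereas here $S\subseteq Z$ from the outset. So I would first dispose of the cardinality bookkeeping. If $|S|<|A|$, split $A$ into $B$ and $A\setminus B$ with $|B|=|S|$; then $\bigoplus\{X_\alpha:\alpha\in A\setminus B\}$ is a topological sum of compacta, hence locally compact, and by Parhomenko's Theorem \cite{parh2} it condenses onto a compactum $K_1$. Treating $P=Z\bigoplus(\bigoplus\{X_\alpha:\alpha\in B\})$ separately and condensing $P\setminus S$ onto a compactum $K_2$, the disjoint sum $K_1\bigoplus K_2$ is again compact, so it suffices to handle the case $|S|=|A|$. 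Enumerate $S=\{s_\alpha:\alpha\in A\}$ and fix a point $p_\alpha\in X_\alpha$ for each $\alpha$.

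Next I would introduce on $X\setminus S$ the coarser topology $\tau'$ exactly as in the theorem: each removed point $s_\alpha$ is ``merged'' into the summand $X_\alpha$ through the marked point $p_\alpha$, by declaring a basic neighborhood of $p_\alpha$ to be the union of a neighborhood of $p_\alpha$ in $X_\alpha$ with a neighborhood of $s_\alpha$ in $Z$ (together with all whole summands $X_\beta$ for which $s_\beta$ falls in that neighborhood of $s_\alpha$), while points away from the marked ones keep neighborhoods of the original form, swallowing a whole summand $X_\alpha$ whenever $s_\alpha$ enters the ambient neighborhood. Two routine checks are then needed: that $(X\setminus S,\tau')$ is Hausdorff (distinct points are separated either inside a single summand, inside $Z$, or across summands via disjoint neighborhoods of their base points $s_\alpha,s_\beta$ in $Z$), and that $\tau'$ is coarser than the subspace topology inherited from $X$, so that the identity map $X\setminus S\to(X\setminus S,\tau')$ is a continuous bijection, i.e. a condensation onto a Hausdorff space.

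It then remains to prove that $(X\setminus S,\tau')$ is compact, which I would do through the characterization that every infinite subset $M$ admits a complete accumulation point. I would split into cases by locating the ``mass'' of $M$: if $|M\cap Z|=|M|$, a complete accumulation point $z$ of $M$ in the compact $Z$ works directly when $z\notin S$, and when $z=s_\alpha\in S$ the point $p_\alpha$ inherits it since every basic neighborhood of $p_\alpha$ contains a neighborhood of $s_\alpha$; if some single summand carries $|M\cap X_\alpha|=|M|$, compactness of $X_\alpha$ supplies the point; and otherwise $M$ is spread over $|M|$ many summands, so the corresponding base points $\{s_\alpha:M\cap X_\alpha\neq\emptyset\}$ form a set of cardinality $|M|$ in $Z$ whose complete accumulation point $x\in Z$ transfers to $M$ because each neighborhood of $x$ (or of $p_{\alpha_0}$ if $x=s_{\alpha_0}$) engulfs the full summands $X_\beta$ with $s_\beta$ nearby.

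I expect this last case to be the main obstacle: the whole point of the topology $\tau'$ is to force neighborhoods to absorb entire summands indexed by nearby base points $s_\beta$, so that accumulation of the \emph{index} points $s_\beta$ in the compact $Z$ is upgraded to accumulation of the \emph{full} cardinality $|M|$ of $M$ itself. Verifying that these cardinalities match is delicate precisely because the deleted points $s_\alpha$ must be dodged by routing all accumulation through the merged points $p_\alpha$; once this is confirmed, $(X\setminus S,\tau')$ is a compactum and the identity map exhibits $X\setminus S$ as subcompact.
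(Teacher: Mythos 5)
Your proposal is correct and follows essentially the same route as the paper: the paper's own ``proof'' of this proposition is precisely the observation that the construction inside the preceding theorem (the Parhomenko reduction to the case $|S|=|A|$, the coarser topology $\tau'$ merging each $s_\alpha$ with a marked point $p_\alpha\in X_\alpha$, and the three-case complete-accumulation-point argument) never uses that the $X_\alpha$ are $a_\tau$-spaces of cardinality $>\tau$ once $S\subseteq Z$, which is exactly what you reproduce. The only step you should make explicit is the one you flag as delicate: the claim $|\{\alpha: M\cap X_\alpha\neq\emptyset\}|=|M|$ in the last case is literally valid only for $|M|$ regular, which suffices since compactness follows from the existence of complete accumulation points for sets of regular cardinality (the same implicit convention as in the paper's argument).
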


\begin{definition} A space $X$ is called {\it almost
$a_{<\tau}$-subcompact}, if for any $C\in [X]^{<\tau}$ there is a
condensation of $X\setminus C$ onto a compactum.
\end{definition}

\begin{theorem} Let $X_{\alpha}$ be a non-empty compact space for any
$\alpha\in A$, where $|A|=\tau$. Then $X=\bigoplus \{X_{\alpha} :
\alpha\in A\}$ is almost $a_{<\tau}$-subcompact.
\end{theorem}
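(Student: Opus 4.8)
The plan is to reduce the statement to the preceding Proposition on sums with a compact core, by manufacturing an auxiliary compact summand out of the part of $X$ that actually meets $C$ and then deleting a single point from it. First I would split the index set according to where $C$ lives: put $A_0=\{\alpha\in A:C\cap X_\alpha\neq\emptyset\}$ and $A_1=A\setminus A_0$. Since each point of $C$ lies in exactly one summand, $|A_0|\le|C|<\tau$; because $\tau=|A|$ is infinite, removing fewer than $\tau$ indices leaves $|A_1|=\tau$. For $\alpha\in A_0$ write $C_\alpha=C\cap X_\alpha$, so $X_\alpha\setminus C_\alpha$ is open in the compact Hausdorff space $X_\alpha$ and hence locally compact Hausdorff; consequently the \emph{damaged part} $L:=\bigoplus\{X_\alpha\setminus C_\alpha:\alpha\in A_0\}$ is locally compact Hausdorff.

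Next I would compactify only this damaged part. Let $Z=L\cup\{\infty\}$ be the one-point compactification of $L$; since $L$ is locally compact Hausdorff, $Z$ is compact Hausdorff, and the subspace topology that $Z$ induces on $Z\setminus\{\infty\}=L$ is exactly the original sum topology of $L$. Now form $Y:=Z\bigoplus(\bigoplus\{X_\gamma:\gamma\in A_1\})$. Here $Z$ is compact, each $X_\gamma$ with $\gamma\in A_1$ is a non-empty compact space, $|A_1|=\tau$, and $\{\infty\}\in[Z]^{\le\tau}$, so the preceding Proposition applies and yields that $Y\setminus\{\infty\}$ is subcompact, i.e.\ condenses onto a compactum.

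It remains to identify $X\setminus C$ with $Y\setminus\{\infty\}$. We have $Y\setminus\{\infty\}=(Z\setminus\{\infty\})\bigoplus(\bigoplus\{X_\gamma:\gamma\in A_1\})=L\bigoplus(\bigoplus\{X_\gamma:\gamma\in A_1\})$, where the piece $L=Z\setminus\{\infty\}$ is clopen in $Y\setminus\{\infty\}$: it is open because $Z$ is open in $Y$, and it is closed because its only additional limit point in $Y$ was $\infty$, which has been deleted. Thus $L$ carries precisely its sum topology. On the other hand $X\setminus C=\bigoplus\{X_\alpha\setminus C_\alpha:\alpha\in A_0\}\bigoplus(\bigoplus\{X_\gamma:\gamma\in A_1\})=L\bigoplus(\bigoplus\{X_\gamma:\gamma\in A_1\})$ with the same sum topology, so the two spaces are homeomorphic. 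Hence $X\setminus C$ condenses onto a compactum, and since $C\in[X]^{<\tau}$ was arbitrary, $X$ is almost $a_{<\tau}$-subcompact.

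The step I expect to be the crux is the topological identification in the third paragraph: one must verify that deleting the single compactifying point $\infty$ from $Y$ exactly \emph{undoes} the one-point compactification on the $L$-piece, restoring the genuine sum topology and keeping that piece clopen, so that no spurious convergence between the various $X_\alpha\setminus C_\alpha$ is introduced. The two supporting facts to check are that $L$ is locally compact Hausdorff (ensuring $Z$ is Hausdorff) and the cardinal arithmetic $|A_1|=\tau$; both are routine once $A_0$ and $A_1$ have been isolated.
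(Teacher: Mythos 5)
Your proof is correct, and while it shares the paper's overall skeleton, it uses a genuinely different key device. Both arguments make the same split of the index set (the summands meeting the removed set versus those missing it, with the same cardinality observation that $\tau$ many summands survive) and both finish by invoking the preceding Proposition about sums $Z\bigoplus(\bigoplus\{X_\alpha : \alpha\in A\})$ with a compact core $Z$. The difference lies in how the ``damaged'' summands are converted into that compact core. The paper keeps the removed set $S$ inside: it notes that $X_1=\bigoplus\{X_\alpha : X_\alpha\cap S\neq\emptyset\}$ is locally compact, applies Parhomenko's theorem to obtain a condensation $\varphi\colon X_1\rightarrow Z$ onto a compactum, and then applies the Proposition to $Z\bigoplus(\bigoplus\{X_\alpha : \alpha\in B\})$ with removed set $\varphi(S)\in[Z]^{<\tau}$; the desired condensation of $X\setminus S$ is then the composition of $\varphi$ restricted to $X_1\setminus S$ (extended by the identity on the untouched summands) with the condensation supplied by the Proposition. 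You instead delete $C$ first, observe that the punctured damaged part $L$ is locally compact Hausdorff, pass to its Alexandroff one-point compactification $Z=L\cup\{\infty\}$, and apply the Proposition with the removed set being the single point $\infty$; your verification that $Z\setminus\{\infty\}$ is clopen in $Y\setminus\{\infty\}$ and carries the original sum topology, so that $X\setminus C$ is literally homeomorphic to $Y\setminus\{\infty\}$, is the step that makes this work, and it is sound (indeed $Z$ is closed in $Y$ as a summand, so closedness of $L$ in $Y\setminus\{\infty\}$ is immediate). What your route buys: it eliminates Parhomenko's condensation theorem in favor of the more elementary one-point compactification, it needs the Proposition only for a one-point removed set, and it produces an honest homeomorphism rather than a composition of condensations. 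What the paper's route buys: brevity in context, since Parhomenko's theorem is already a standing tool cited and used repeatedly throughout that paper.
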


\begin{proof} Let $S\in [X]^{<\tau}$. Then the set of indexes
$B=\{\alpha\in A: X_{\alpha}\cap S=\emptyset\}$ has the
cardinality $\tau$. The set $X_1=\bigoplus \{X_{\alpha} :
\alpha\in A\setminus B\}$ is a locally compact space. By the
theorem of Parkhomenko \cite{parh2}, there is a condensation
$\varphi : X_1 \rightarrow Z$ from the space $X_1$ onto a compact
space $Z$. By Proposition 2.5, the space $Z\bigoplus (\bigoplus
\{X_{\alpha} : \alpha\in B\})\setminus \varphi(S)$ is subcompact.
\end{proof}

We recall the definition of $\sum_{\tau}$- product of
 spaces for $\tau\geq \aleph_0$. Let
$\{X_{\lambda} : \lambda\in \Lambda \}$ be a family of topological
spaces. Let  $X=\prod \{X_{\lambda} : \lambda\in \Lambda \}$ be
the Cartesian product with the Tychonoff topology. Take a point
$p=(p_{\lambda})_{\lambda\in \Lambda}\in X$. For each
$x=(x_{\lambda})_{\lambda\in \Lambda}\in X$, let
$Supp(x)=\{\lambda\in \Lambda : x_{\lambda}\neq p_{\lambda}\}$.
Then the subspace $\sum_{\tau}(p)=\{x\in X: |Supp(x)|\leq \tau\}$
of $X$ is called {\it $\sum_{\tau}$- product}  $\{X_{\lambda} :
\lambda\in \Lambda \}$ about $p$. The subspace $\{x\in X:
|Supp(x)|<\aleph_0\}$ of $X$ is called {\it $\sigma$- product}. In
1959, H.H. Corson \cite{cor} introduced the definitions of
$\sum_{\aleph_0}-$ products and $\sigma$-products and studied
these spaces.

\medskip

 Further, we study the property of the subcompactness of
$\sum_{\tau}$- ($\sigma$-) products of compacta for any $\tau\geq
\aleph_0$.

\begin{proposition} Let $X$ be $\sum_{\tau}$-product $\{X_{\beta}
: \beta\in B \}$ of non-trivial compacta $X_{\beta}$ of density at
most $\tau$ and $|B|\geq\tau^+$. Then $X$ is not subcompact.
\end{proposition}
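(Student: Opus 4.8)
The plan is to assume, for contradiction, that $X$ is subcompact and to fix a base $\mathcal B$ witnessing this (so that every regular filter base contained in $\mathcal B$ has nonempty intersection); I will then exhibit a regular filter base in $\mathcal B$ whose intersection is empty. Embed $X$ as a dense subspace of the compact product $P=\prod\{X_\beta:\beta\in B\}$, with base point $p$ defining the $\Sigma_\tau$-product. Since every $X_\beta$ is non-trivial and $|B|\ge\tau^+$, there are points of $P$ whose support (with respect to $p$) has size $\tau^+$, so $X\subsetneq P$, while $X$ is dense in $P$ because it contains the $\sigma$-product. For $U$ open in $X$ density gives $\mathrm{cl}_X(U)=\mathrm{cl}_P(U)\cap X$, and for a regular filter base $\mathcal F$ one has $\bigcap\mathcal F=\bigcap\mathrm{cl}_X(U)=\big(\bigcap\mathrm{cl}_P(U)\big)\cap X$. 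Hence it suffices to build a regular filter base $\mathcal F\subseteq\mathcal B$ with $\bigcap_{U\in\mathcal F}\mathrm{cl}_P(U)\subseteq P\setminus X$.

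The mechanism for leaving $X$ is coordinate pinning. For each $\beta$ use regularity of the compact Hausdorff $X_\beta$ to pick an open $O_\beta$ and a value $z_\beta\in O_\beta$ with $p_\beta\notin\mathrm{cl}(O_\beta)$; if $U\subseteq\pi_\beta^{-1}(O_\beta)$ then $\mathrm{cl}_P(U)\subseteq\pi_\beta^{-1}(\mathrm{cl}\,O_\beta)$, so every point of $\mathrm{cl}_P(U)$ differs from $p$ in coordinate $\beta$. A first key observation is that a \emph{nonempty} open subset of $X$ can pin only finitely many coordinates, since pinning infinitely many coordinates off the base point describes a nowhere dense subset of the $\Sigma_\tau$-product; thus no chain of pinnings reaches length $\omega$, and forcing support $>\tau$ requires a genuinely \emph{directed} family. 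I therefore index $\mathcal F=\{U_s:s\in[S]^{<\omega}\}$ by the finite subsets of a fixed $S\subseteq B$ with $|S|=\tau^+$, arranging that $U_s$ pins exactly the coordinates in $s$ and that $s\subseteq s'$ implies $\mathrm{cl}_X(U_{s'})\subseteq U_s$. Then any two members satisfy $\mathrm{cl}_X(U_{s\cup s'})\subseteq U_s\cap U_{s'}$, so $\mathcal F$ is a regular filter base; and since $\bigcup_s s=S$ has size $\tau^+$, every point of $\bigcap_s\mathrm{cl}_P(U_s)$ has support $\supseteq S$, hence lies in $P\setminus X$, giving $\bigcap\mathcal F=\emptyset$ and the desired contradiction.

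The construction runs by recursion on $|s|$. Once all $U_r$ with $r\subsetneq s$ are chosen, $W_s:=\bigcap_{r\subsetneq s}U_r$ is a \emph{finite} intersection of open sets, hence open, and the approximant $y_s\in X$ that equals $z$ on $s$ and $p$ elsewhere is the intended witness that $W_s\ne\emptyset$. Using regularity of $X$ and that $\mathcal B$ is a base, I then select $U_s\in\mathcal B$ with $y_s\in U_s$, $\mathrm{cl}_X(U_s)\subseteq W_s$, and $U_s\subseteq\bigcap_{\beta\in s}\pi_\beta^{-1}(O_\beta)$, so that $U_s$ pins $s$ while nesting its closure below all earlier members. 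The freedom to keep using fresh coordinates of $S$ is purely cardinal: each stage touches only finitely many coordinates and fewer than $\tau^+$ stages precede a given $s$, so, as $|B|\ge\tau^+$, unused coordinates are always available.

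The hard part will be the \emph{coherence} of this directed system: each $U_s$ must be small enough to pin $s$ and to satisfy $\mathrm{cl}_X(U_s)\subseteq U_r$ for every $r\subsetneq s$, yet large enough that the later approximants $y_{s'}$ with $s'\supseteq s$ still lie in it, keeping every $W_{s'}$ nonempty so the recursion can continue. Because $\mathcal B$ is arbitrary its members need not be coordinate boxes, and this tension between pinning (forcing sets to be small) and threading the approximants (forcing them to be large) is exactly where the proof must do real work. I expect to resolve it by choosing the pinning neighborhoods $O_\beta$ with closures tight enough that $z\in\bigcap_s\mathrm{cl}_P(U_s)$; this single target point both certifies the nonemptiness needed at each stage and collapses the intersection into $\{z\}\subseteq P\setminus X$. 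Showing that this can be maintained against an arbitrary witnessing base $\mathcal B$ is the crux of the argument.
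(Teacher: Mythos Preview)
Your proposal rests on a misreading of the definition. In this paper, ``subcompact'' does \emph{not} mean de Groot subcompact (the condition about regular filter bases in a witnessing base). The paper defines a space to be subcompact if it admits a \emph{condensation} (a continuous bijection) onto a compact Hausdorff space; see the introduction, where after citing Alexandrov's problem the authors write ``It is natural to call such spaces as \textit{subcompact} spaces.'' Consequently your entire framework --- fixing a base $\mathcal B$ and building a regular filter base in it with empty intersection --- addresses the wrong property. Even if your construction succeeded, it would only show that $X$ is not de Groot subcompact, which says nothing about whether $X$ can be condensed onto a compactum.

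The paper's proof proceeds along completely different lines: assume a condensation $f:X\to K$ onto a compactum $K$ and derive a contradiction by a density/cardinality argument. Since $X$ is $\tau$-bounded, so is $K$. If $d(K)>\tau$, one builds an increasing $\tau^+$-chain of compact subsets $K_\alpha\subseteq K$ with $d(K_\alpha)\le\tau$, pulls them back to compacta $Y_\alpha\subseteq X$ each depending on only $\tau$ coordinates, collects these into a sub-$\Sigma_\tau$-product $Y$ whose Stone--\v Cech compactification is a full product of density $\le\tau$ (Hewitt--Marczewski--Pondiczery), and concludes $d(f(Y))\le\tau$ while $f(Y)$ contains a strictly increasing $\tau^+$-chain --- contradiction. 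If $d(K)\le\tau$, a dense set of size $\le\tau$ pulls back into a compact sub-product $Z\subsetneq X$ with $f(Z)=K$, contradicting injectivity of $f$. None of this involves filter bases.

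As a side remark, even internally your plan has a gap: the claim that ``pinning infinitely many coordinates off the base point describes a nowhere dense subset of the $\Sigma_\tau$-product'' is false for $\tau\ge\aleph_0$; pinning any set of at most $\tau$ coordinates to open sets missing $p_\beta$ still gives an open set in $X$.
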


\begin{proof} On the contrary, let $f: X\rightarrow K$ be a
condensation from $X$ onto a compact space $K$. Since $X$ is a
$\tau$- bounded space (the closure of any set of cardinality at
most $\tau$ is compact), then the space $K$ is a $\tau$- bounded
space.

 Consider two cases:

 (1) $d(K)>\tau$. Since $K$ is a $\tau$-bounded space, there is a family
$\{ K_{\alpha} : \alpha<\tau^+\}$ of compact subsets of $K$, such
that $K_{\alpha}\subset K_{\beta}$ for $\alpha<\beta$ and
$d(K_{\alpha})\leq \tau$. For each $K_{\alpha}$, there is a
compact subset $Y_{\alpha}$ of  $X$ such that
$f(Y_{\alpha})\supseteq K_{\alpha}$. Since $Y_{\alpha}$ depends on
$\tau$ coordinates in $\sum_{\tau}$-product $X$ and
$\alpha<\tau^+$, then $\bigcup\limits_{\alpha<\tau^+}
Y_{\alpha}\subset Y\subset X$, where $Y$ is $\sum_{\tau}$-product
of compacta $X_{\beta(\alpha)}$, where $Y_{\alpha} \subseteq
X_{\beta(\alpha)}$ and $d(X_{\beta(\alpha)})\leq \tau$ for each
$\alpha<\tau^+$.

 Note that  $\beta Y= \prod \{ X_{\beta(\alpha)} : \alpha<\tau^+\}$ where $\beta Y$ is the Stone-$\check{C}$ech compactification of the space $Y$
 \cite{ArchPonom}.
 Since $X_{\beta(\alpha)}$ is a compact space of density at most
$\tau$ for each $\alpha<\tau^+$, then, by the Hewitt-
Marczewski-Pondiczery theorem (Theorem 2.3.15 in \cite{enge}), the
space $\beta Y$ has a density at most $\tau$. The condensation
$f\upharpoonright Y: Y \rightarrow f(Y)$ can be extended to the
continuous function $h: \beta Y \rightarrow f(Y)$. It follows that
$f(Y)$ is of density at most $\tau$ and it contains an increasing
transfinite sequence $\{ K_{\alpha} : \alpha<\tau^+\}$.
Contradiction.

(2) $d(K)\leq\tau$. Let $S$ be a dense subset of the space $K$ and
$|S|\leq \tau$. Consider the subset $f^{-1}(S)$ of $X$. Since each
point $x\in f^{-1}(S)$ depends on $\tau$ coordinates $\alpha$,
then the set $f^{-1}(S)$ also depends on $\tau$ coordinates
$\{\alpha_s : s\in L\}$, $|L|\leq \tau$. Hence, $f^{-1}(S)\subset
Z=\prod \{ X_{\alpha_s}: s\in L\}$. Note that $Z$ is a compact
space of density at most $\tau$ and $f(Z)=K$. Since $X\neq Z$ and
$f$ is an injective mapping, we get a contradiction.

\end{proof}

\begin{corollary} An uncountable $\sum$-product of non-trivial
metrizable compacta is not subcompact.
\end{corollary}

{\bf Question 1.} Let $X$ be $\sum_{\tau}$-product $\{X_{\beta} :
\beta\in B \}$ of non-trivial compacta $X_{\beta}$ of density (or
weight) $\geq\tau^+$ for each $\beta\in B$ and $|B|\geq\tau^+$.
Will $X$ be a subcompact space?

\begin{theorem} Let $X$ be an infinite $\sigma$-product of
compacta. Then $X$ is not subcompact.
\end{theorem}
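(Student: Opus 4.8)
The plan is to derive a contradiction from the existence of a condensation onto a compactum by playing off two opposing features of an infinite $\sigma$-product $X$: on the one hand $X$ is a countable increasing union of compact subsets, while on the other hand it is nowhere locally compact. I take the only non-degenerate reading of ``infinite $\sigma$-product'', namely that infinitely many of the factors $X_\lambda$ are non-trivial; fixing a countably infinite set $\{\lambda_n : n\in\omega\}$ of such indices, the base point $p$ exhibits $X$ as $X=\bigcup_{n\in\omega} C_n$, where $C_n=\{x\in X : \mathrm{Supp}(x)\subseteq\{\lambda_0,\dots,\lambda_n\}\}$ is homeomorphic to the finite product $\prod_{i\le n} X_{\lambda_i}$, hence compact, and $C_n\subseteq C_{n+1}$. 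If only countably many factors are non-trivial this already covers all of $X$.

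Step one is the Baire-category half. Suppose $f:X\to K$ is a condensation onto a compact Hausdorff space $K$. Each $f(C_n)$ is a compact, hence closed, subset of $K$, and since $f$ is onto we have $K=\bigcup_{n\in\omega} f(C_n)$. As $K$ is compact Hausdorff it is a Baire space, so some $f(C_m)$ has non-empty interior $W$ in $K$. Now $f^{-1}(W)$ is a non-empty open subset of $X$, and because $f$ is injective, $f^{-1}(W)\subseteq f^{-1}(f(C_m))=C_m$. Thus $f^{-1}(W)$ is a non-empty open set whose closure in $X$ lies in the compact set $C_m$; in particular $X$ has a point of local compactness.

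Step two is to contradict this by showing that an infinite $\sigma$-product is nowhere locally compact. Given any $x\in X$ and any basic neighbourhood $V=\big(\prod_{i\in s}U_i\times\prod_{i\notin s}X_i\big)\cap X$ with $s$ finite and $x_i\in U_i$ for $i\in s$, choose an infinite set of non-trivial indices disjoint from $s$ together with non-base values $a_\lambda\neq p_\lambda$, and build a staircase $\{y_k\}\subseteq V$ by letting $y_k$ agree with $x$ on $s$ and switching on more and more of these outside coordinates. In the ambient product $\prod X_\lambda$ this sequence converges to a point of infinite support, which lies outside $X$; hence $\{y_k\}$ has no accumulation point in $X$, so $\overline{V}^{X}$ is not compact. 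As $x$ and $V$ were arbitrary, no point of $X$ has a neighbourhood with compact closure, contradicting Step one and completing the argument in the principal (i.e.\ $\sigma$-compact) case of countably many non-trivial factors.

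The main obstacle is the passage to uncountably many non-trivial factors, where $X$ is no longer $\sigma$-compact and the Baire step cannot be applied to $X$ itself. The natural remedy is to restrict $f$ to the closed copy $X'$ of a countable $\sigma$-product sitting inside $X$ as the points supported on $\{\lambda_n\}$ and to run the Baire argument on the compactum $\overline{f(X')}$; the delicate point is that $f(X')=\bigcup_n f(C_n)$ need only be a dense $F_\sigma$ there, so some $f(C_n)$ having non-empty interior is not automatic, since a dense $F_\sigma$ can be a union of nowhere dense closed sets. I would therefore either show that the countable closed subproduct already forbids the condensation---noting that its two-point subproduct is a closed copy of $\mathbb{Q}$ and adapting the no-isolated-points obstruction---or reduce the uncountable case to the preceding $\sum_{\tau}$-product Proposition; pinning down this reduction is the one genuinely non-routine step.
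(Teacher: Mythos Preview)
Your Baire-category strategy is exactly the paper's argument: the paper simply asserts that an infinite $\sigma$-product of compacta is a countable union of compact sets each with empty interior (hence of first category in itself), while any compactum is of second category, so no condensation is possible. Your Step~one is precisely this argument unpacked, and Step~two (nowhere local compactness) is a correct but slightly roundabout way of getting the ``empty interior'' half.

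The gap is your choice of decomposition. By fixing a countable set $\{\lambda_n\}$ of non-trivial indices and setting $C_n=\{x:\mathrm{Supp}(x)\subseteq\{\lambda_0,\dots,\lambda_n\}\}$, you only exhaust $X$ when at most countably many factors are non-trivial; this is why you run into trouble in the uncountable case and start reaching for restrictions of $f$ and auxiliary reductions. None of that is needed. Stratify instead by the \emph{size} of the support:
\[
X_n=\{x\in X:|\mathrm{Supp}(x)|\le n\}.
\]
Each $X_n$ is closed in the full product $\prod_\lambda X_\lambda$ (if $|\mathrm{Supp}(x)|\ge n+1$, separate $n+1$ of its non-base coordinates from the corresponding $p_\lambda$'s to get a basic neighbourhood missing $X_n$), hence compact; and $X=\bigcup_{n}X_n$ regardless of $|\Lambda|$. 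Moreover each $X_n$ has empty interior in $X$: any basic open set $V=(\prod_{i\in s}U_i\times\prod_{i\notin s}X_i)\cap X$ contains points whose support is any prescribed finite superset of $s$, so $V\not\subseteq X_n$. Plugging this decomposition into your Step~one finishes the proof in one stroke for all index sets, with no separate ``uncountable case'' and no need for Step~two. In short: the only missing idea is that the $\sigma$-product is \emph{always} $\sigma$-compact, via the support-size filtration rather than a coordinate-wise one.
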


\begin{proof} Since $X$ is an infinite $\sigma$-product of
compacta, it is a countable union of compacta with an empty
interior ($X$ is a space of the first category) and, hence, $X$
cannot be condensed onto a compactum (a compact space is a space
of the second category).
\end{proof}

Note that the inverse limit of an inverse system of (strictly)
$a$-spaces may not be an $a$-space. Indeed, let $X$ be a compact
space that is not an $a$-space. Since $X$ is a subset of
$\mathbb{I}^{\alpha}$, where $\mathbb{I}=[0,1]$ and $\alpha=w(X)$,
the space $X$ can be represented as the inverse limit of an
inverse system of $\mathbb{S}=\{\mathbb{I}^{\beta},
\pi^{\sigma}_{\beta}, \omega_1\}$, where $\beta\leq\sigma$,
$\beta,\sigma\in \omega_1$, $\pi^{\sigma}_{\beta}: \pi_{\sigma}(X)
\rightarrow \pi_{\beta}(X)$ and $\pi_{\eta}:X \rightarrow
\mathbb{I}^{\eta}$ is the projection for $\eta\in \omega_1$. Note
that $\mathbb{I}^{\eta}$ is a metrizable compact space (an
$a$-space) for each $\eta\in \omega_1$, but
$X=\lim\limits_{\leftarrow} \mathbb{S}$ is not an $a$-space.

\medskip

The following theorem was proved in \cite{Pytkeev51}(Theorem 1).

\begin{theorem}\label{thp} Let $\{X_{\alpha} : \alpha\in
\Lambda\}$ be a family of non-empty Tychonoff spaces,
$w(X_{\alpha})\leq \tau$ for each $\alpha\in \Lambda$ and
$|\Lambda|=2^{\tau}$. Then $\bigoplus \{X_{\alpha} : \alpha\in
\Lambda\}$ admits a condensation onto $\mathbb{I}^{\tau}$.
\end{theorem}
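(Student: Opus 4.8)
The plan is to reduce the statement to a tiling problem inside the cube and then solve it by a transfinite construction. First I would use that each $X_\alpha$ is Tychonoff of weight at most $\tau$, so by the Tychonoff embedding theorem it embeds homeomorphically into $\mathbb{I}^\tau$; fix such embeddings $e_\alpha\colon X_\alpha\hookrightarrow\mathbb{I}^\tau$. Since a map out of a topological sum is continuous exactly when its restriction to each summand is continuous, producing a condensation $f\colon\bigoplus_\alpha X_\alpha\to\mathbb{I}^\tau$ is the same as producing continuous injections $f_\alpha\colon X_\alpha\to\mathbb{I}^\tau$ whose images $P_\alpha=f_\alpha(X_\alpha)$ are pairwise disjoint and cover $\mathbb{I}^\tau$. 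Thus the theorem is equivalent to: $\mathbb{I}^\tau$ can be partitioned into $2^\tau$ pairwise disjoint pieces $\{P_\alpha:\alpha\in\Lambda\}$ with each $P_\alpha$ a continuous one-to-one image of $X_\alpha$. A cardinality check shows this is at least numerically consistent: $|X_\alpha|\le 2^\tau$ for every $\alpha$ and $|\Lambda|=2^\tau$, so $\sum_\alpha|X_\alpha|=2^\tau=|\mathbb{I}^\tau|$.

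Two extreme cases guide the construction and should be recorded first. If every $X_\alpha$ is a singleton, the sum is discrete of cardinality $2^\tau$ and any bijection onto $\mathbb{I}^\tau$ is a condensation. If every $X_\alpha$ is homeomorphic to $\mathbb{I}^\tau$, use the slice decomposition $\mathbb{I}^\tau\cong\mathbb{I}^\tau\times\mathbb{I}^\tau=\bigsqcup_{v\in\mathbb{I}^\tau}(\mathbb{I}^\tau\times\{v\})$ into $2^\tau$ disjoint copies of $\mathbb{I}^\tau$, and map the $\alpha$-th summand homeomorphically onto the slice indexed by $v_\alpha$, where $\alpha\mapsto v_\alpha$ is a bijection $\Lambda\to\mathbb{I}^\tau$. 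The general case interpolates between these: \emph{large} summands should be laid out like slices and \emph{small} ones should fill the remaining gaps.

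To handle the general case I would build the partition by transfinite recursion of length $2^\tau$. Well-order $\Lambda=\{\alpha_\xi:\xi<2^\tau\}$ and the points $\mathbb{I}^\tau=\{z_\xi:\xi<2^\tau\}$. At stage $\xi$, with $R_\xi=\mathbb{I}^\tau\setminus\bigcup_{\eta<\xi}P_{\alpha_\eta}$ the uncovered region and $z$ the least-indexed point still in $R_\xi$, define $P_{\alpha_\xi}\subseteq R_\xi$ to be a continuous one-to-one image of $X_{\alpha_\xi}$ containing $z$. Choosing $z$ in this way forces every point to be covered by some stage, and placing $P_{\alpha_\xi}$ inside $R_\xi$ keeps the pieces disjoint; after $2^\tau$ stages every summand has been used exactly once and every point covered, giving the required partition. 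Here I would exploit that $\mathbb{I}^\tau\cong(\mathbb{I}^{\aleph_0})^\tau$ is a product of Hilbert cubes and hence homogeneous (Keller), so that a copy of $X_{\alpha_\xi}$ placed anywhere can be slid by a self-homeomorphism to pass through the prescribed point $z$.

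The hard part will be the placement step, and making it compatible with covering rather than merely with disjointness. Placing an entire summand---which may be crowded (say a copy of $\mathbb{Q}$) or of full cardinality $2^\tau$---inside the shrinking region $R_\xi$ while hitting $z$ requires $R_\xi$ to remain structurally rich: a mere $2^\tau$-density invariant is not enough, since a connected summand such as $\mathbb{I}^\tau$ needs a connected target whereas $R_\xi$ could a priori be totally disconnected. The crux is therefore to isolate the correct invariant on $R_\xi$---strong enough to admit a continuous injective copy, through any prescribed point, of every Tychonoff space of weight $\le\tau$ (in particular connected ones), yet preserved after deleting such a copy---and to check that each $R_\xi$ retains it. I expect this to be achievable by always placing the image inside a nowhere dense ``slab'' of $R_\xi$ (a copy of $\mathbb{I}^\tau$ carved from the uncovered region through $z$), so that deletions never exhaust the local cardinality, together with a bookkeeping that reconciles the partial filling of slabs with the enumeration of points; the singleton and full-cube extreme cases above exhibit both endpoints of this bookkeeping.
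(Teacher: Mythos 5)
There is a genuine gap, and it sits exactly where you place it yourself: the ``placement step'' of the transfinite recursion. Your reduction of the theorem to a partition problem (continuous injections $f_\alpha\colon X_\alpha\to\mathbb{I}^\tau$ with pairwise disjoint images covering the cube) is correct, as are the two extreme cases, but the recursion is never made to work, and the difficulties are not routine. First, the invariant you would need is strong: since a summand may be compact and connected (e.g.\ a copy of $\mathbb{I}^\tau$), its image under any continuous injection into a Hausdorff space is an embedded continuum, so at every stage the residual set $R_\xi$ must contain, through the prescribed point $z$, a homeomorphic copy of whatever compact summands remain --- and this must survive $2^\tau$ deletions, each of which may itself remove a set of full cardinality $2^\tau$. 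No cardinality or density invariant gives this, and you do not exhibit any invariant that is simultaneously (i) strong enough to admit a copy of an arbitrary Tychonoff space of weight $\le\tau$ through an arbitrary prescribed uncovered point, and (ii) preserved by the deletions your own construction performs. Second, the appeal to homogeneity does not do what you want: Keller-type homogeneity gives a self-homeomorphism of the whole cube carrying a point of a placed copy to $z$, but that homeomorphism moves the copy out of your control --- it need not keep it inside $R_\xi$ nor disjoint from the previously placed images, so ``sliding a copy through $z$'' inside a constrained region is unjustified. Third, there is an unresolved tension in the bookkeeping: to keep $R_\xi$ rich you want every image nowhere dense and thin, yet the union of all $2^\tau$ images must be exactly the whole cube, so the images must in the end exhaust it; deciding which summand absorbs which leftover points (singletons and rigid cubes demand opposite treatments, and the enumeration of $\Lambda$ interleaves them arbitrarily) is precisely the content of the theorem, and it is deferred with ``I expect this to be achievable.'' As written, the proposal is a correct reformulation plus a conjecture, not a proof.

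For comparison: the paper itself does not prove this statement at all --- it is quoted as Theorem 1 of Pytkeev's 1977 paper (reference \cite{Pytkeev51}), so there is no internal argument to measure your construction against. That makes it all the more important that the blind attempt stand on its own, and in its present form it does not: the missing placement lemma, with an explicitly maintained structural invariant on the residual region (or, alternatively, a global non-greedy decomposition scheme that assigns regions to summands in advance), is the whole theorem.
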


Recall that the $i$-weight $iw(X)$ of a space $X$ is the smallest
infinite cardinal number $\kappa$ such that $X$ can be mapped by a
one-to-one continuous mapping onto a space of the weight not
greater than $\kappa$.

\begin{theorem} Let $\{ X_{\alpha}: \alpha\in \Lambda\}$ be a
family of non-empty spaces, $iw(X_{\alpha})\leq \tau$ for each
$\alpha\in \Lambda$ and $|\Lambda|=2^{\tau}$. Then $\bigoplus
\{X_{\alpha}: \alpha\in \Lambda\}$ is $a_{<2^\tau}$-subcompact.
\end{theorem}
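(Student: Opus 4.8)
The plan is to handle an arbitrary deleted set by reducing $X\setminus C$ to the free-sum situation of Theorem~\ref{thp} and condensing onto the Tychonoff cube $\mathbb{I}^{\tau}$. Fix $C\in[X]^{<2^{\tau}}$ and put $B=\{\alpha\in\Lambda:X_{\alpha}\cap C\neq\emptyset\}$. As each point of $C$ lies in exactly one summand, $|B|\le|C|<2^{\tau}$; since $|\Lambda|=2^{\tau}$ is infinite and $|B|<2^{\tau}$, cardinal arithmetic forces $|\Lambda\setminus B|=2^{\tau}$. Letting $\Gamma=\{\alpha\in\Lambda:X_{\alpha}\setminus C\neq\emptyset\}\supseteq\Lambda\setminus B$, we get $|\Gamma|=2^{\tau}$ and
$$X\setminus C=\bigoplus\{X_{\alpha}\setminus C:\alpha\in\Gamma\}.$$
Because $X_{\alpha}\setminus C$ is a subspace of $X_{\alpha}$ we have $iw(X_{\alpha}\setminus C)\le iw(X_{\alpha})\le\tau$, so $X\setminus C$ has exactly the form assumed for $X$: a free sum of $2^{\tau}$ non-empty spaces of $i$-weight at most $\tau$.

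Now I would produce the condensation onto a compactum. For each $\alpha\in\Gamma$, the bound $iw(X_{\alpha}\setminus C)\le\tau$ supplies a condensation $g_{\alpha}:X_{\alpha}\setminus C\to Y_{\alpha}$ onto a Tychonoff space $Y_{\alpha}$ with $w(Y_{\alpha})\le\tau$ (I use the usual convention that the $i$-weight is computed with Tychonoff targets, which is what Theorem~\ref{thp} needs). Then $\bigoplus\{g_{\alpha}:\alpha\in\Gamma\}$ condenses $X\setminus C$ onto $\bigoplus\{Y_{\alpha}:\alpha\in\Gamma\}$, a free sum of exactly $2^{\tau}$ non-empty Tychonoff spaces of weight at most $\tau$. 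By Theorem~\ref{thp} the latter condenses onto $\mathbb{I}^{\tau}$, and composition gives a condensation of $X\setminus C$ onto the compactum $\mathbb{I}^{\tau}$.

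What remains is to see that this target is not merely a compactum but an $a_{<2^{\tau}}$-space, i.e.\ that for every $D\in[\mathbb{I}^{\tau}]^{<2^{\tau}}$ the space $\mathbb{I}^{\tau}\setminus D$ again condenses onto a compactum; this is the step I expect to be the main obstacle. Since $\mathbb{I}^{\tau}$ is dyadic, hence weakly dyadic, it is a strictly $a$-space \cite{kt}, which already disposes of the case $|D|\le\aleph_{0}$; the real difficulty is that here one must delete as many as $<2^{\tau}=|\mathbb{I}^{\tau}|$ points. I would attack this by a Raukhvarger-type continuous decomposition $\mathcal{D}$ of $\mathbb{I}^{\tau}$: using that $2^{\tau}$ points survive the deletion, glue each point of $D$ to a carefully chosen surviving point so that every class is closed, every class meets $\mathbb{I}^{\tau}\setminus D$ in exactly one point, and the projection $P$ is closed (equivalently $\mathcal{D}$ is continuous); then $P|_{\mathbb{I}^{\tau}\setminus D}$ is a condensation of $\mathbb{I}^{\tau}\setminus D$ onto the compact decomposition space. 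Keeping the decomposition continuous while absorbing all of $D$ is the crux, and it is here that the weak dyadicity of $\mathbb{I}^{\tau}$ together with the abundance of surviving points must be exploited.
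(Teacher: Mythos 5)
Your first two paragraphs are, modulo a cosmetic reordering, exactly the paper's proof: the paper first condenses each $X_{\alpha}$ onto a Tychonoff space $Y_{\alpha}$ with $w(Y_{\alpha})\le\tau$, obtaining a condensation $f\colon X\to Y=\bigoplus\{Y_{\alpha}:\alpha\in\Lambda\}$, and then deletes $f(S)$, whereas you delete $C$ first and condense afterwards; in both versions the point is that what remains is a free sum of exactly $2^{\tau}$ non-empty Tychonoff spaces of weight at most $\tau$ (since fewer than $2^{\tau}$ summands can be swallowed by the deleted set), so Theorem~\ref{thp} yields a condensation onto $\mathbb{I}^{\tau}$.

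Your third paragraph, however, chases a requirement that the statement does not impose. The paper never defines a non-``almost'' notion of $a_{<\kappa}$-subcompactness; the only notion of this shape it defines is \emph{almost $a_{<\tau}$-subcompact} (the definition preceding the theorem on sums of $\tau$ many compacta), which asks only that $X\setminus C$ condense onto \emph{a compactum} for every $C$ of cardinality $<\tau$. The conclusion ``$a_{<2^{\tau}}$-subcompact'' is that notion with $\tau$ replaced by $2^{\tau}$ (the word ``almost'' is simply dropped in the statement), and accordingly the paper's own proof stops exactly where your second paragraph stops. Under your stronger reading --- that the target must itself be an $a_{<2^{\tau}}$-space --- the paper's argument would be just as incomplete as yours, since nothing in the paper shows that $\mathbb{I}^{\tau}$ is an $a_{<2^{\tau}}$-space; and your sketched Raukhvarger-type continuous decomposition is indeed only a plan, not a proof, so it could not be accepted as closing that (unneeded) step. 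Delete the last paragraph and your proof is complete and coincides with the paper's.
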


\begin{proof} Let $S\in [X]^{<2^\tau}$ where $X=\bigoplus
\{X_{\alpha}: \alpha\in \Lambda\}$. Since $iw(X_{\alpha})\leq
\tau$, for each $\alpha\in \Lambda$, there is a condensation
$X_{\alpha}$ onto a Tychonoff space $Y_{\alpha}$ of the weight
$\leq \tau$. Let $Y=\bigoplus \{Y_{\alpha}: \alpha\in \Lambda\}$.
Then there is a condensation $f: X \rightarrow Y$. Since
$w(Y_{\alpha}\setminus f(S))\leq \tau$ for each $\alpha\in
\Lambda$ and $|\{\alpha : Y_{\alpha}\setminus f(S)\neq \emptyset
\}|=2^{\tau}$, then, by Theorem \ref{thp}, $Y\setminus f(S)$
admits a condensation onto $\mathbb{I}^{\tau}$.
\end{proof}

Recall that the absolute $aX$ of a topological space $X$ is the
set of converging ultrafilters and the natural map $\pi_X: aX
\rightarrow X$ simply assigns the limit to each ultrafilter
\cite{powo}.

\begin{example} There is a compact space $X$ with
$\chi(x,X)>\tau$ (or $\pi\chi(x,X)>\tau$) for each $x\in X$ such
that it is not $a_{\tau}$-space.
\end{example}

\begin{proof} Let $Z$ be a compact space such that
$\chi(z,Z)>\tau$ ($\pi\chi(z,Z)>\tau$) for each $z\in Z$. Let
$X=aZ$ be the absolute of $Z$. Then $X$ is an extremally
disconnected compact space and $\chi(x,X)>\tau$
($\pi\chi(x,X)>\tau$) for each $x\in X$. By Theorem \ref{iliad},
$X$ is not $a_{\tau}$-space. Indeed, it is sufficient to consider
$X\setminus S$, where $S\in [X]^{\leq \tau}$.
\end{proof}

\begin{example} There are a compact non-$a_{\tau}$-space $X$
and an irreducible continuous mapping of $X$ onto
$\mathbb{I}^{\tau}$.
\end{example}

\begin{proof} Let $X=a\mathbb{I}^{\tau}$ be the absolute of
the space $\mathbb{I}^{\tau}$. The natural map
$\pi_{\mathbb{I}^{\tau}}: X \rightarrow \mathbb{I}^{\tau}$ is an
irreducible continuous surjection. Then, by Theorem \ref{iliad},
the space $X\setminus C$ where $C\in [X]^{\omega}$ is not
subcompact.
\end{proof}

\begin{theorem}\label{214} Let $X= \bigoplus \{X_{\alpha} : \alpha\in
\Lambda\}\bigcup \{\xi\}$ where $X_{\alpha}$ is a compact space
for each $\alpha\in \Lambda$ and a basic neighborhood of $\xi$
contains of all but finitely many sets $X_{\alpha}$. The space $X$
is an (strongly) $a_{\tau}$-space if and only if $X_{\alpha}$ is
an (strongly) $a_{\tau}$-space for each $\alpha\in \Lambda$.
\end{theorem}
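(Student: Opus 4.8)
The plan is to exploit that the topology on $X$ makes each $X_{\alpha}$ a clopen compact subset and gives $\xi$ the neighbourhood base $\{\xi\}\cup\bigcup\{X_{\alpha}:\alpha\in\Lambda\setminus F\}$ with $F$ finite, so that $X$ is the one‑point compactification of $\bigoplus\{X_{\alpha}:\alpha\in\Lambda\}$ and, dually, each $X\setminus X_{\alpha}$ is closed and hence compact. Both implications are then obtained by transporting condensations across this clopen decomposition, handling the plain and the strictly cases together. Throughout, for $C\in[X]^{\le\tau}$ I write $C_{\alpha}=C\cap X_{\alpha}$.

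For the forward implication I assume $X$ is a (strictly) $a_{\tau}$‑space, fix $\alpha_{0}$ and $C\in[X_{\alpha_0}]^{\le\tau}$, and apply the hypothesis to $C$ inside $X$: there is a condensation $f:X\setminus C\to K$ onto a compactum $K$ (in the strictly case, extending to a continuous $\widetilde f:X\to K$). Since $X\setminus C=(X_{\alpha_0}\setminus C)\sqcup(X\setminus X_{\alpha_0})$ with $X\setminus X_{\alpha_0}$ compact, its image $L=f(X\setminus X_{\alpha_0})$ is a compact, hence closed, subset of $K$, and by bijectivity $f(X_{\alpha_0}\setminus C)=K\setminus L$. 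Thus $f\upharpoonright(X_{\alpha_0}\setminus C)$ is a condensation onto the open, locally compact set $K\setminus L$; in the plain case Parhomenko's theorem condenses $K\setminus L$ onto a compactum and composition finishes the argument. In the strictly case I keep $\widetilde f\upharpoonright X_{\alpha_0}:X_{\alpha_0}\to Y:=\widetilde f(X_{\alpha_0})$, which is continuous with $Y$ compact and injective on $X_{\alpha_0}\setminus C$ with image $K\setminus L$. This restriction need not be the witness, since a point of $C$ may be sent into $L$, i.e. into $Y\cap L=Y\setminus(K\setminus L)$, which are images of no retained point, so the restricted map is not onto a compactum. I repair this by collapsing: pick a retained $p^{*}\in X_{\alpha_0}\setminus C$, put $k^{*}=f(p^{*})\in K\setminus L$, and form $M=Y/{\sim}$ crushing the closed set $(Y\cap L)\cup\{k^{*}\}$ to one point $*$. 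Normality of the compact Hausdorff $Y$ makes $M$ compact Hausdorff, the composite $\widetilde g:X_{\alpha_0}\to M$ is continuous, and $\widetilde g\upharpoonright(X_{\alpha_0}\setminus C)$ becomes a bijection onto $M$ — injective because the only identifications lie over $Y\cap L$ and $k^{*}$, and onto because $*=[k^{*}]$ is hit by $p^{*}$.

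For the reverse implication I assume every $X_{\alpha}$ is a (strictly) $a_{\tau}$‑space and take $C\in[X]^{\le\tau}$, choosing for each $\alpha$ a condensation $f_{\alpha}:X_{\alpha}\setminus C_{\alpha}\to K_{\alpha}$ onto a compactum (with extension $\widetilde f_{\alpha}:X_{\alpha}\to K_{\alpha}$ in the strictly case). If $\xi\notin C$ I glue the $f_{\alpha}$ and send $\xi$ to the point at infinity of $K=(\bigoplus_{\alpha}K_{\alpha})^{+}$; since the $X_{\alpha}$ are clopen and basic neighbourhoods of $\xi$ omit only finitely many summands, the glued map is a condensation onto the compactum $K$, and gluing the $\widetilde f_{\alpha}$ with $\xi\mapsto\infty$ gives the continuous extension. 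If $\xi\in C$ this fails, because after deleting $\xi$ nothing maps to $\infty$; so I again collapse, identifying $\infty$ with $k^{*}=f_{\alpha^{*}}(p^{*})$ for a retained $p^{*}\in X_{\alpha^{*}}\setminus C_{\alpha^{*}}$, which turns the glued map into a condensation onto the compact Hausdorff quotient and, in the strictly case, allows $\xi$ to be sent continuously to $[\infty]=[k^{*}]$.

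The main obstacle is precisely the dual role of $\xi$ as the point at infinity: in both the strictly forward case and the $\xi\in C$ reverse case the naive one‑point compactification yields a target whose extra point is hit only by deleted points, or by none at all, so the restricted map fails to surject onto a compactum. The decisive device in each case is the same — collapse that offending point together with the image of a single retained point — and the only genuine verifications left are that these finite (closed) collapses preserve compactness and Hausdorffness, via normality of compact Hausdorff spaces, and that continuity at $\xi$ survives, which reads off the explicit neighbourhood base at $\xi$.
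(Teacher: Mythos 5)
Your proof is correct, and its skeleton is the same as the paper's: in the forward direction you restrict the given condensation $f$ of $X\setminus C$ to $X_{\alpha_0}\setminus C$ and use that its image is the open, locally compact set $K\setminus f(X\setminus X_{\alpha_0})$; in the reverse direction you glue the condensations $f_{\alpha}$ along the clopen summands and split into the cases $\xi\notin C$ (send $\xi$ to the added point of $(\bigoplus_{\alpha}K_{\alpha})^{+}$ --- this is literally the paper's Case 2) and $\xi\in C$. Where you genuinely differ is in how the remaining locally compact pieces are turned into compacta: the paper twice invokes Parhomenko's theorem as a black box (in the forward direction, and in its Case $\xi\in S$), whereas you inline the proof of that theorem as an explicit collapse, identifying the point at infinity (respectively the offending set $(Y\cap L)\cup\{k^{*}\}$) with the image $k^{*}$ of a single retained point $p^{*}$. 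This explicitness buys something concrete. Parhomenko's theorem returns only a condensation, so the paper's argument as written establishes the plain $a_{\tau}$ assertions but never exhibits the continuous extension over the deleted points that the ``strictly'' half of the statement demands; your quotient targets come equipped with those extensions ($\widetilde g = q\circ\widetilde f\upharpoonright X_{\alpha_0}$ in the forward direction, $\xi\mapsto[\infty]$ in the reverse one), with continuity at $\xi$ read off from the neighbourhood base, and the key verification --- that the restriction of $\widetilde g$ to $X_{\alpha_0}\setminus C$ remains injective (the only collapsed point of $K\setminus L$ is $k^{*}$) and becomes surjective (the class of the collapsed set is hit by $p^{*}$) --- is carried out correctly. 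So your proof matches the paper's strategy in outline but is more complete where it matters: it repairs the paper's elision of the strictly case.
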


\begin{proof} ($\Rightarrow$) Let $X$ be a (strictly)
$a_{\tau}$-space for some cardinal number $\tau$, $\beta\in
\Lambda$ and $S\in [X_{\beta}]^{\leq\tau}$. Since $X$ is a
(strictly) $a_{\tau}$-space,  there is a condensation
$f:X\setminus S \rightarrow Z$ from $X\setminus S$ onto a compact
space $Z$. Then the compact space $X\setminus X_{\beta}$ is
homeomorphic to the compact space $f(X\setminus X_{\beta})$. It
follows that the space
 $f(X_{\beta}\setminus S)=Z\setminus f(X\setminus X_{\beta})$ is
 locally compact and, by Parhomenko's Theorem \cite{parh2}, it
 admits a condensation onto a compact space.

($\Leftarrow$) Let  $X_{\alpha}$ be an (strictly) $a_{\tau}$-space
for each $\alpha\in \Lambda$ and some cardinal number $\tau$. Let
$S\in [X]^{\leq \tau}$. For each $\alpha\in \Lambda$ there exists
a condensation $f_{\alpha}: X_{\alpha}\setminus S \rightarrow
K_{\alpha}$, where $K_{\alpha}$ is compact.

1. $\xi\in S$. Then $X\setminus S$ admits a condensation $f$ onto
a locally compact space $Y=\bigoplus \{ K_{\alpha} : \alpha\in
\Lambda\}$ so that $f\upharpoonright (X_{\alpha}\setminus
S)=f_{\alpha}$ for each $\alpha\in \Lambda$. By Parhomenko's
Theorem \cite{parh2}, $Y$ admits a condensation onto a compactum.

2.  $\xi\notin S$. Then $X\setminus S$ admits a condensation $f$
onto a compact space $Z=\bigoplus \{ K_{\alpha} : \alpha\in
\Lambda\}\bigoplus \{p\}$ so that $f\upharpoonright
(X_{\alpha}\setminus S)=f_{\alpha}$ for each $\alpha\in \Lambda$
and $f(\xi)=p$, where a basic neighbourhood of the point $p$
contains of all but finitely many sets $K_{\alpha}$.
\end{proof}

\begin{remark} The previous theorem is not true in the class
of (strictly) $a_{\tau}$-subcompact spaces. In \cite{Pytkeev3}, it
was considered a space $X$ such that $X \bigoplus X$ admits a
consensation onto a metrizable compact space (moreover, $X
\bigoplus X$ is strictly $a_{\tau}$-subcompact), but $X$ is not
subcompact.
\end{remark}

\begin{theorem} Let $X= \bigoplus \{X_{\alpha} : \alpha\in
\Lambda\}\bigcup \{\xi\}$, where $X_{\alpha}$ is an (strictly)
$a_{\tau}$-space for each $\alpha\in \Lambda$, $\xi\notin \bigcup
\{X_{\alpha} : \alpha\in \Lambda\}$ and  $X$ is Hausdorff. Then
$X$ is an (strictly) $a_{\tau}$-space.
\end{theorem}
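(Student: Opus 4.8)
The plan is to reduce the statement to the $(\Leftarrow)$ direction of Theorem \ref{214} by showing that the hypotheses already force $X$ to carry precisely the topology considered there, namely the one in which a basic neighborhood of $\xi$ is the complement of finitely many of the summands. The only genuine extra input here is that the topology at $\xi$ is not prescribed in advance but is merely required to be Hausdorff (and that $X$ is compact, which is implicit in the conclusion, since every $a_{\tau}$-space is compact). First I would record that each $X_{\alpha}$ is clopen in $X$: it is open because $X$ restricts to the free-sum topology on $\bigoplus\{X_{\alpha}:\alpha\in\Lambda\}$, and it is closed because $X_{\alpha}$ is compact (being an $a_{\tau}$-space) and compact subsets of a Hausdorff space are closed. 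In particular, for each $\alpha$ the point $\xi$ has a neighborhood disjoint from $X_{\alpha}$.

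Next I would pin down the neighborhood filter of $\xi$. Given any open $U\ni\xi$, the family $\{X_{\alpha}:\alpha\in\Lambda\}\cup\{U\}$ is an open cover of $X$, so compactness yields a finite $F\subset\Lambda$ with $U\supseteq\{\xi\}\cup\bigcup\{X_{\alpha}:\alpha\notin F\}$. Conversely, for every finite $F\subset\Lambda$ the set $\{\xi\}\cup\bigcup\{X_{\alpha}:\alpha\notin F\}$ is open, since its complement $\bigcup\{X_{\alpha}:\alpha\in F\}$ is a finite union of clopen sets; being an open set containing $\xi$, it is a neighborhood of $\xi$. Hence the sets $\{\xi\}\cup\bigcup\{X_{\alpha}:\alpha\notin F\}$, with $F\subset\Lambda$ finite, form a neighborhood base at $\xi$. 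This is exactly the description of the topology in Theorem \ref{214}, so $X$ falls under that theorem, and its $(\Leftarrow)$ direction gives that $X$ is an (strictly) $a_{\tau}$-space.

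Alternatively, one can avoid invoking Theorem \ref{214} and repeat its construction: for $S\in[X]^{\le\tau}$ pick condensations $f_{\alpha}:X_{\alpha}\setminus S\to K_{\alpha}$ onto compacta (extending to $\widetilde{f}_{\alpha}:X_{\alpha}\to K_{\alpha}$ in the strict case) and assemble them; if $\xi\in S$ the target is the locally compact sum $\bigoplus\{K_{\alpha}:\alpha\in\Lambda\}$, which condenses onto a compactum by Parhomenko's Theorem \cite{parh2}, while if $\xi\notin S$ one adjoins a single point $p$ with the cofinite-neighborhood topology over $\bigoplus\{K_{\alpha}:\alpha\in\Lambda\}$ to obtain a compact target and sets $\widetilde{f}(\xi)=p$. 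The main obstacle, and the real content beyond Theorem \ref{214}, is the second paragraph: one must see that Hausdorffness together with compactness of $X$ leaves no freedom in the topology at $\xi$. The crucial point is that compactness of $X_{\alpha}$ makes it closed in the Hausdorff space $X$, so the summands are clopen and $\xi$ is separated from each of them, after which compactness of $X$ collapses every neighborhood of $\xi$ onto a cofinite union of whole summands. Continuity of the assembled extension $\widetilde{f}$ at $\xi$ in the strict case is then immediate, since the preimage of a basic neighborhood $\{p\}\cup\bigcup\{K_{\alpha}:\alpha\notin F\}$ of $p$ is exactly the basic neighborhood $\{\xi\}\cup\bigcup\{X_{\alpha}:\alpha\notin F\}$ of $\xi$.
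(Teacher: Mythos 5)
Your proof is correct under the reading you adopt, and it takes a genuinely different (and in fact more careful) route than the paper's. The paper's proof never assumes compactness of $X$: it uses the same key fact you isolate --- the summands $X_{\alpha}$, being compact, are clopen in the Hausdorff space $X$, so each set $\{\xi\}\cup\bigcup\{X_{\alpha}:\alpha\notin F\}$ with $F$ finite is open in $X$ --- but only to conclude that the identity map is a condensation of $X$ onto the space $Y$ of Theorem \ref{214}, and then it cites Theorem \ref{214} together with Proposition \ref{pr21}. As written, that chain yields only that $X$ is (strictly) $a_{\tau}$-subcompact, which is strictly weaker than the announced conclusion; and indeed, without compactness of $X$ the announced conclusion is false (take $\xi$ isolated and $\Lambda$ infinite: all stated hypotheses hold, yet $X$ is not compact, hence not an $a_{\tau}$-space, though it is strictly $a_{\tau}$-subcompact). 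Your proof addresses exactly this point: you read compactness of $X$ into the statement --- which, as you say, is forced by the conclusion --- and then use it to show that the cofinite unions form a neighborhood base at $\xi$, so that $X$ carries precisely the topology of Theorem \ref{214} and that theorem applies verbatim; your fallback of re-running the construction of Theorem \ref{214} is just the paper's own proof of that theorem. Equivalently, one could keep the paper's condensation $X\to Y$ and close the gap by remarking that a continuous bijection from a compact space onto a Hausdorff space is a homeomorphism, which is in substance your compactness argument. So the paper's route buys a conclusion (subcompactness) that is valid without assuming $X$ compact, while your route buys the literal statement of the theorem at the price of making the implicit compactness hypothesis explicit; of the two, yours is the one that actually proves what the theorem asserts.
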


\begin{proof} Note that the space $X$ admits a condensation
onto $Y=\bigoplus \{X_{\alpha} : \alpha\in \Lambda\}\bigcup
\{\xi\}$, where a basic neighborhood of $\xi$ contains of all but
finitely many sets $X_{\alpha}$. By Theorem \ref{214} and
Proposition \ref{pr21}, the space $Y$ is an (strictly)
$a_{\tau}$-space.
\end{proof}

In \cite{bop} (Theorem 13), it is proved the following theorem.

\begin{theorem} Let $X=\prod\{X_{\alpha} : \alpha<\tau\}$ be
product of non-trivial compacta. Let $f: X\rightarrow Y$ be a
continuous surjection such that the cardinality $|f\pi^{-1}_A\pi_A
x|>\aleph_0$ for any $A\in [\tau]^{<\omega}$ and $x\in X$. Then
$Y$ is a strictly $a$-space.
\end{theorem}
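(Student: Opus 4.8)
The plan is to fix an arbitrary countable set $C=\{c_n:n\in\omega\}\subseteq Y$ and to produce the required condensation of $Y\setminus C$ by the method of continuous decompositions recalled in the Introduction. First observe that $Y$ is a compactum, being a continuous image of the compact space $X$, and it is Hausdorff by the standing assumption. I would build a decomposition $\mathcal D=\{\{c_n,a_n\}:n\in\omega\}\cup\{\{b\}:b\in Y\setminus(C\cup A)\}$ of $Y$, where $A=\{a_n:n\in\omega\}$ is a suitably chosen countable subset of $Y\setminus C$ with the $a_n$ pairwise distinct. Every point of the compactum $Y$ is closed, so all members of $\mathcal D$ are closed, and by the criterion recalled right after the Alexandrov--Hopf theorem the decomposition space $Y(\mathcal D)$ is a compactum if and only if $\mathcal D$ is continuous. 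Granting continuity, the projection $P:Y\to Y(\mathcal D)$ restricts to a bijection of $Y\setminus C$ onto $Y(\mathcal D)$ (each pair $\{c_n,a_n\}$ is represented in $Y\setminus C$ by its member $a_n$), hence to a condensation onto a compactum, while $P$ itself is the continuous extension over $Y$ demanded in the definition of a strictly $a$-space. Thus the theorem reduces to choosing $A$ so that $\mathcal D$ is continuous.

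The engine for selecting the partners $a_n$ is the hypothesis. Fix $x_n\in f^{-1}(c_n)$. For finite $A\subseteq\tau$ the set $\pi_A^{-1}(\pi_A x_n)$ is a closed subproduct, hence compact, so $L_A^n:=f(\pi_A^{-1}(\pi_A x_n))$ is a compactum; the family $\{L_A^n\}_{A\in[\tau]^{<\omega}}$ is decreasing and, by assumption, consists of uncountable sets. If $y\in\bigcap_A L_A^n$, the sets $f^{-1}(y)\cap\pi_A^{-1}(\pi_A x_n)$ form a decreasing family of nonempty compacta, whose intersection $f^{-1}(y)\cap\{x_n\}$ is then nonempty, forcing $y=c_n$; hence $\bigcap_A L_A^n=\{c_n\}$. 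Consequently, for every neighbourhood $W$ of $c_n$ there is a finite $A_n$ with $L_{A_n}^n\subseteq W$, and this $L_{A_n}^n$ is uncountable. I can therefore pick $a_n\in L_{A_n}^n$ inside a prescribed small neighbourhood of $c_n$ while avoiding the countable set $C$ and all previously chosen partners, and simultaneously fix a preimage $y_n\in f^{-1}(a_n)$ agreeing with $x_n$ on $A_n$, letting the $A_n$ run through an increasing sequence of finite subsets of $\tau$.

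The hard part is to verify that the partners can be chosen so that $\mathcal D$ is continuous, equivalently that the equivalence relation $R=\Delta_Y\cup\bigcup_n\{(c_n,a_n),(a_n,c_n)\}$ is closed in $Y\times Y$; this is exactly the demand that the pairs $(c_n,a_n)$ have no accumulation point off the diagonal. The natural way to control this is to force each $a_n$ into a prescribed small neighbourhood of $c_n$ (possible since $L_A^n\downarrow\{c_n\}$ with every $L_A^n$ uncountable) and to align the preimages $y_n$ with $x_n$ on the growing sets $A_n$: along a convergent subnet $(c_{n_i},a_{n_i})\to(y,z)$, after passing to a further subnet with $x_{n_i}\to x^{\ast}$ and $y_{n_i}\to y^{\ast}$ in $X$, one has $y=f(x^{\ast})$, $z=f(y^{\ast})$, and the agreement on $\bigcup_n A_n$ pushes $x^{\ast}$ and $y^{\ast}$ together on those coordinates. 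The genuine obstacle is the behaviour on the remaining coordinates: one must organise the selection, exploiting that every slice $L_A^n$ is uncountable so that there is always room to dodge $C$ and the earlier partners while keeping $a_n$ arbitrarily $Y$-close to $c_n$, in such a way that $z=\lim a_{n_i}$ cannot escape from $y=\lim c_{n_i}$. Establishing this closedness of $R$ is the heart of the proof and the only place where the full strength of the hypothesis is used; the rest is the formal machinery of continuous decompositions.
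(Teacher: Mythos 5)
You should know at the outset that the paper contains no proof of this statement: it is quoted verbatim from \cite{bop} (Theorem 13 there) and used only as a black box to derive the amplification proved immediately after it. So your argument can only be judged on its own merits. The formal frame you set up is sound: the pairs-plus-singletons decomposition has closed members, the quotient of the compactum $Y$ is a compactum exactly when the decomposition is continuous (equivalently, when the relation $R$ is closed in $Y\times Y$), the projection restricted to $Y\setminus C$ is then a condensation onto that compactum, and the projection itself is the continuous extension required for ``strictly''. Your reduction $\bigcap_A L^n_A=\{c_n\}$, with the consequence that $L^n_A$ eventually enters every neighbourhood of $c_n$ while remaining uncountable, is also correct and correctly proved. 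None of this, however, is the theorem.

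The genuine gap is that the argument stops exactly where the theorem begins: you never exhibit a choice of the partners $a_n$ (equivalently, of the lifts $y_n$) for which $R$ is closed; you only name this step as ``the heart of the proof''. Worse, the constraints you do impose provably cannot suffice when $\tau$ is uncountable. Take $X=Y=D^{\omega_1}$ with $f$ the identity (the hypothesis holds trivially) and let $C$ be countable and dense in $Y$ (such $C$ exists by the Hewitt--Marczewski--Pondiczery theorem). All of your constraints touch only countably many coordinates: agreement of $y_n$ with $x_n$ on the finite sets $A_n$, and membership of $a_n$ in one prescribed basic neighbourhood $W_n$ of $c_n$. A choice consistent with every one of your constraints is $a_n(\gamma)=1-c_n(\gamma)$ for all $\gamma$ outside $A_n\cup \mathrm{supp}(W_n)$; fixing any coordinate $\gamma_0$ outside the countable set $\bigcup_n\bigl(A_n\cup \mathrm{supp}(W_n)\bigr)$, every pair $(c_n,a_n)$ then differs at $\gamma_0$, so every cluster point $(y,z)$ of the set of pairs satisfies $y(\gamma_0)\neq z(\gamma_0)$ and lies off the diagonal; since $C$ is dense, the closure of the pair set projects onto all of $Y$ and is therefore uncountable, so some such cluster point is not itself a pair, and $R$ is not closed. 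Thus continuity of $\mathcal D$ demands a global coordination among the partners for different $n$ --- for instance, arranging that the sets of coordinates where $y_n$ differs from $x_n$ form a point-finite family, which does force all cluster points onto the diagonal --- and the actual mathematical difficulty, which your proposal never touches, is to reconcile such smallness of the difference sets with the hypothesis, since $|f\pi^{-1}_A\pi_A x_n|>\aleph_0$ supplies uncountably many values only when one is free to alter the point $x_n$ on a \emph{cofinite} set of coordinates.
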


An amplification of this theorem is the replacement of the
condition $|f\pi^{-1}_A\pi_A x|>\aleph_0$ with the condition
$f\pi^{-1}_A\pi_A x\neq fx$.

\begin{theorem} Let $X=\prod\{X_{\alpha} : \alpha<\tau\}$ be
 product of an infinite number of non-trivial compacta. Let $f: X\rightarrow Y$ be a
continuous surjection such that  $f\pi^{-1}_A\pi_A x\neq fx$ for
any $A\in [\tau]^{<\omega}$ and $x\in X$. Then $Y$ is a strictly
$a$-space.
\end{theorem}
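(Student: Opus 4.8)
The plan is to realize the required condensation as the restriction to $Y\setminus C$ of the projection onto a suitable decomposition space of $Y$, built by the method of continuous decompositions. Since $Y=f(X)$ is a Hausdorff continuous image of the compactum $X$, it is itself a compactum, so it suffices to produce, for each countable $C=\{c_n:n\in\omega\}\subseteq Y$, a continuous decomposition $\mathcal D$ of $Y$ into closed sets each of which meets $Y\setminus C$ in exactly one point. For such a $\mathcal D$ the projection $P:Y\to Y(\mathcal D)$ is closed (Alexandrov--Hopf), $Y(\mathcal D)$ is a compactum, and $P\upharpoonright(Y\setminus C)$ is a continuous bijection onto $Y(\mathcal D)$, i.e. a condensation, extending to the continuous map $P$ on all of $Y$; this is exactly the strict $a$-property. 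I would take $\mathcal D$ to be a matching: all classes singletons except for countably many two-point classes $\{c_n,a_n\}$ realizing a bijection of $C$ onto a set $A=\{a_n\}\subseteq Y\setminus C$. By the Alexandrov--Hopf criterion such a $\mathcal D$ is continuous precisely when the equivalence relation $\sim\;=\Delta_Y\cup\{(c_n,a_n),(a_n,c_n):n\in\omega\}$ is closed in $Y\times Y$, where $\Delta_Y$ is the diagonal, and this holds as soon as the family $\{(c_n,a_n)\}$ accumulates only on $\Delta_Y$.

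To build the matching I would lift to the product. First, the hypothesis $f\pi_A^{-1}\pi_A x\neq fx$ for every $A\in[\tau]^{<\omega}$ forces every fibre $f^{-1}(y)$ to have empty interior (a nonempty-interior fibre would contain a basic open set, hence a slab $\pi_A^{-1}\pi_A x$ on which $f$ would be constant); in particular $Y$ has no isolated points, and since a nonempty open subset of a crowded compactum is uncountable while $C$ is countable, $Y\setminus C$ is dense. Now fix preimages $x_n\in f^{-1}(c_n)$. The finite modifications of $x_n$ (the $\sigma$-product through $x_n$) are dense in $X$, and more generally those supported off a prescribed finite set $F$ are dense in the slab $\pi_F^{-1}\pi_F x_n$; since $f$ is non-constant on that slab, by continuity it cannot be constant on this dense set, so there is a finite modification $x_n'$ of $x_n$ with difference-support $D_n\subseteq\tau\setminus F$ and $fx_n'\neq c_n$. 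Choosing the $D_n$ pairwise disjoint (at stage $n$ take $F=D_1\cup\dots\cup D_{n-1}$) makes $\{D_n\}$ point-finite, whence for every finite $A$ all but finitely many pairs $(x_n,x_n')$ agree on $A$; thus $\{(x_n,x_n')\}$ accumulates only on $\Delta_X$. Applying $f\times f$, which is closed as a map of compacta, pushes this forward to $\overline{\{(c_n,a_n)\}}\subseteq\{(c_n,a_n)\}\cup\Delta_Y$ with $a_n=fx_n'$, so $\sim$ is closed, $\mathcal D$ is a continuous decomposition into closed finite sets, $Y(\mathcal D)$ is a compactum, and $P\upharpoonright(Y\setminus C)$ is the sought condensation extending to $P$.

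The step I expect to be the main obstacle is forcing the partners to land in $Y\setminus C$ and stay pairwise distinct while the supports $D_n$ remain disjoint. The finite modifications above are only guaranteed to change the value ($fx_n'\neq c_n$), and under the present purely qualitative hypothesis the values available on fresh coordinates could a priori all fall inside the countable set $C$, so one cannot simply read off a partner in $Y\setminus C$. This is exactly where the amplification must be paid for: in the previous theorem the slab-image $f\pi_A^{-1}\pi_A x$ is uncountable, so the countably many forbidden values $C\cup\{a_1,\dots,a_{n-1}\}$ are discarded for free, whereas here only non-constancy is at hand. To overcome this I would exploit the density already established together with Baire category: each $f^{-1}(c_n)$ is closed with empty interior, so $f^{-1}(C)=\bigcup_n f^{-1}(c_n)$ is a meagre $F_\sigma$ in the Baire space $X$ and the points mapping off $C$ are dense; I would then interleave this with the coordinate-freshness bookkeeping, selecting at stage $n$ a finite modification on fresh coordinates whose image simultaneously avoids $C$ and the earlier partners. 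Reconciling \emph{avoid the countable set $C$} with \emph{use only finitely many new coordinates}, so as not to spoil the accumulation of $\{(c_n,a_n)\}$ on the diagonal, is the delicate point; once a single such admissible choice is justified at each stage, the induction closes and the construction above yields the continuous decomposition, proving that $Y$ is a strictly $a$-space.
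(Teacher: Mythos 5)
Your overall framework (condensation as the projection of a continuous decomposition of $Y$ given by a matching of $C$ with a set $A\subseteq Y\setminus C$) is the standard method and is consistent with what the paper does elsewhere, but your proof has a genuine gap, and you have located it yourself without closing it: the inductive selection of the partners $a_n$ --- simultaneously avoiding the countable set $C\cup\{a_1,\dots,a_{n-1}\}$, staying pairwise distinct, and keeping the difference-supports $D_n$ finite and pairwise disjoint --- is never carried out. You say ``once a single such admissible choice is justified at each stage, the induction closes,'' but that admissible choice is precisely the whole content of the theorem. Note that even if you knew the slab images were uncountable, your bookkeeping would still not close as written: a point of the slab $\pi_F^{-1}\pi_F x_n$ whose image avoids the forbidden countable set may differ from $x_n$ on infinitely many coordinates, and since avoiding a countable (generally non-closed) set of values is not an open condition, continuity does not let you truncate such a point to a finite modification while preserving the avoidance. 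So the tension you flag is real and your sketch does not resolve it.

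The paper's proof is entirely different and much shorter: it does not construct any decomposition, but reduces the statement to the previously quoted Theorem 2.18 (Theorem 13 of \cite{bop}, which assumes $|f\pi_A^{-1}\pi_A x|>\aleph_0$) by showing that the qualitative hypothesis self-improves to the quantitative one. Namely, fix $x$ and finite $A$ and suppose $f\pi_A^{-1}\pi_A x$ is countable; then the compact slab $P=\pi_A^{-1}\pi_A x$ is a countable union of the closed sets $P\cap f^{-1}y$, so by Baire category one of them has nonempty interior in $P$; that interior contains $U\cap P$ for a basic open $U$ depending on a finite set $B$ of coordinates, and then for any $x'\in U\cap P$ one has $\pi_{A\cup B}^{-1}\pi_{A\cup B}x'\subseteq U\cap P$, whence $f\pi_{A\cup B}^{-1}\pi_{A\cup B}x'=\{fx'\}$, contradicting the hypothesis for the finite set $A\cup B$. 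Your remark that ``in the previous theorem the slab-image is uncountable \dots\ whereas here only non-constancy is at hand'' is exactly backwards: non-constancy on all finite slabs \emph{implies} uncountability of all finite slab images. Ironically, you had the needed ingredient in hand --- you use Baire category to show the fibers $f^{-1}(c_n)$ have empty interior in $X$ --- but you applied it globally to $X$ instead of inside each slab $P$, which is where it yields the reduction and finishes the proof by citing Theorem 2.18.
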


\begin{proof} Let $x\in X$ and $A\in [\tau]^{<\omega}$. We
show that the set $f\pi^{-1}_A\pi_A x$ is uncountable. On the
contrary, suppose that  $f\pi^{-1}_A\pi_A x$ is countable. Then
the compact space $P=\pi_A^{-1}\pi_Ax=\bigcup
\{\pi_A^{-1}\pi_Ax\cap f^{-1}y: y\in f\pi_A^{-1}\pi_Ax\}$ is the
sum of countable collection of compacta $\pi_A^{-1}\pi_Ax\cap
f^{-1}y$. So there is $y\in f\pi_A^{-1}\pi_Ax$ for which
$Int_P(\pi_A^{-1}\pi_Ax\cap f^{-1}y)\neq\emptyset$. There is a
basic open set $U=\prod \{U_{\alpha}: \alpha\in B\}\times \prod
\{X_{\alpha}: \alpha\in \tau\setminus B\}$ where $B$ is finite and
$U_{\alpha}$ is open for all $\alpha\in B$. $U\cap
\pi_A^{-1}\pi_Ax\neq \emptyset$, $f(U\cap \pi_A^{-1}\pi_Ax)=y$.
Then $f\pi_{A\cup B}^{-1}\pi_{A\cup B}x'=y=fx'$ for any point
$x'\in U\cap \pi_A^{-1}\pi_Ax$, which contradicts the condition of
the theorem. We proved that $|f\pi^{-1}_A\pi_A x|>\aleph_0$ and,
thus, all the conditions of Theorem 2.18 are met.
\end{proof}

\begin{corollary} Let $X$ be dyadic compact and
$\chi(x,X)>\mathfrak{m}$ for each $x\in X$. Then $X$ is a strictly
$a_{\mathfrak{m}}$-space.
\end{corollary}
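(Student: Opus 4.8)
The plan is to realize $X$ as a continuous image of a Cantor cube and then apply a cardinal-indexed strengthening of Theorem 2.19, with the character hypothesis supplying precisely the hypothesis of that theorem at the level of $[\tau]^{\le\mathfrak{m}}$ rather than $[\tau]^{<\omega}$. First I would fix $\tau=w(X)$ and, using that $X$ is dyadic, a continuous surjection $g:D^{\tau}\to X$ of the Cantor cube $D^{\tau}=\prod_{\alpha<\tau}\{0,1\}$ onto $X$. For $p\in D^{\tau}$ and $A\subseteq\tau$ I abbreviate the cylinder $C_A=\pi_A^{-1}\pi_A p$; for finite $F$ this set is clopen, and $C_A=\bigcap\{C_F:F\in[A]^{<\omega}\}$ is a directed intersection. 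The key reduction is the claim that for every $p\in D^{\tau}$ and every $A\in[\tau]^{\le\mathfrak{m}}$ one has $g\pi_A^{-1}\pi_A p\ne\{g(p)\}$; that is, a cylinder over at most $\mathfrak{m}$ coordinates is never collapsed by $g$ to a single point. This is the step where $\chi(x,X)>\mathfrak{m}$ enters.

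To prove the claim I argue by contraposition. Suppose $g\pi_A^{-1}\pi_A p=\{g(p)\}$ for some $A$ with $|A|\le\mathfrak{m}$. Since $g$ is a continuous map of compacta and $C_A=\bigcap_F C_F$ is a directed intersection of compacta, continuity gives $\bigcap_F g(C_F)=g(C_A)=\{g(p)\}$, a directed decreasing family of closed sets indexed by $[A]^{<\omega}$. Working with an irreducible restriction of $g$ so that the fibre $g^{-1}(g(p))$ coincides with $C_A$, the sets $O_F=X\setminus g(D^{\tau}\setminus C_F)=\{y:g^{-1}y\subseteq C_F\}$ are open, contain $g(p)$, and satisfy $\bigcap_F O_F=\{g(p)\}$. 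As $X$ is compact Hausdorff, $\chi(g(p),X)=\psi(g(p),X)\le|[A]^{<\omega}|=|A|\le\mathfrak{m}$, contradicting the hypothesis. Hence the non-collapsing property holds for all $A\in[\tau]^{\le\mathfrak{m}}$.

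Finally I would invoke the $\mathfrak{m}$-indexed analogue of Theorems 2.18--2.19: the same continuous-decomposition construction (the method of Kulpa and Turza\'nski used in the proof of Theorem 2.19) shows that a continuous surjection $g$ of a product of non-trivial compacta satisfying $g\pi_A^{-1}\pi_A p\ne\{g(p)\}$ for all $A\in[\tau]^{\le\mathfrak{m}}$ and all $p$ has strictly $a_{\mathfrak{m}}$-space image. Concretely, given $C\in[X]^{\le\mathfrak{m}}$, one uses the non-collapsing property over $\le\mathfrak{m}$ coordinates (which guarantees more than $\mathfrak{m}$ branches issuing from each removed point) to build a continuous decomposition of $X$ into closed classes, at most $\mathfrak{m}$ of them nondegenerate, each meeting $X\setminus C$ in at most one point; the quotient is then a compactum onto which $X\setminus C$ condenses, and the projection extends continuously over $X$, so $X$ is strictly $a_{\mathfrak{m}}$.

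The main obstacle will be this last step: genuinely upgrading the decomposition argument of Theorems 2.18--2.19 from countable to $\mathfrak{m}$-sized deleted sets while keeping the decomposition continuous (equivalently, the projection closed), so that removing $\le\mathfrak{m}$ points never exhausts the available branching. A secondary delicate point is the fibre behaviour in the character computation: the fibre of $g$ over $g(p)$ may strictly contain the collapsed cylinder, so the reduction to an irreducible map (with the product structure still adequate for the decomposition method, as for $\omega$-sets) must be handled with care, rather than taken for granted.
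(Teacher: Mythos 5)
Your plan has the same skeleton as the paper's proof (represent the dyadic compactum as a continuous image of a Cantor cube, convert the character hypothesis into a condition on images of small cylinders, then apply a product-decomposition theorem), but the paper's entire proof is the citation ``By Theorem 2.18 and Theorem 11 in \cite{bop}'': the cardinal-indexed statement you call ``the $\mathfrak{m}$-indexed analogue of Theorems 2.18--2.19'' is exactly what the external Theorem 11 of \cite{bop} supplies, and you neither cite it nor prove it. You flag this yourself as ``the main obstacle,'' and it is a genuine gap, not a technicality: upgrading the Kulpa--Turza$\acute{n}$ski decomposition method from deleting countable sets to deleting sets of size $\mathfrak{m}$ is the whole content of the corollary. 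Moreover, even granting your Claim, your parenthetical assertion that non-collapsing ``guarantees more than $\mathfrak{m}$ branches'' does not follow: in the proof of Theorem 2.19 the passage from $f\pi_A^{-1}\pi_A x\neq fx$ to $|f\pi_A^{-1}\pi_A x|>\aleph_0$ is a Baire-category argument (a compactum is not a countable union of closed sets with empty interior), and that argument is specific to finite $A$ and countable unions; it does not survive replacement by $A\in[\tau]^{\le\mathfrak{m}}$ and unions of $\mathfrak{m}$ compacta without an entirely new idea.

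There is a second, independent problem: your key Claim is false for an arbitrary continuous surjection $g:D^{\tau}\to X$. Take $\tau>\mathfrak{m}$, an infinite $A\in[\tau]^{\le\mathfrak{m}}$, a point $p\in D^{\tau}$, the cylinder $C_A=\pi_A^{-1}\pi_A p$, a point $q\notin C_A$, and let $g:D^{\tau}\to X$ be the quotient map collapsing the closed set $K=C_A\cup\{q\}$ to a single point $y$, all other fibres being singletons (this decomposition is continuous and consists of closed sets, so $X$ is compact Hausdorff, and it is dyadic). Every $x\neq y$ has $\chi(x,X)=\tau>\mathfrak{m}$, since $X\setminus\{y\}$ is homeomorphic to the open subspace $D^{\tau}\setminus K$. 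Also $\chi(y,X)>\mathfrak{m}$: a local base at $y$ corresponds to a family of open sets containing $K$ cofinal (under $\supseteq$) among all such open sets; choosing $\alpha_0\in A$ with $q_{\alpha_0}\neq p_{\alpha_0}$ and setting $H=\{x: x_{\alpha_0}=q_{\alpha_0}\}$, any cofinal family $\{W_i: i<\mathfrak{m}\}$ would, upon intersecting with $H$ (note $C_A\subseteq D^{\tau}\setminus H$, so for each basic clopen $V$ with $q\in V\subseteq H$ some $W_i\subseteq (D^{\tau}\setminus H)\cup V$, whence $W_i\cap H\subseteq V$), yield a local base of size $\le\mathfrak{m}$ at $q$ in $H\cong D^{\tau}$, contradicting $\chi(q,D^{\tau})=\tau$. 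So $X$ satisfies every hypothesis of the corollary, yet $g\pi_A^{-1}\pi_A p=\{g(p)\}$. Thus the fibre phenomenon you dismiss as a ``secondary delicate point'' is decisive: the character at $g(p)$ can be carried entirely by the part of the fibre outside the collapsed cylinder. The irreducible-restriction repair cannot rescue this, because in the example $g$ is already irreducible ($K$ has empty interior, so every closed set meeting all fibres is all of $D^{\tau}$), and in general restricting to a closed set on which $g$ is irreducible destroys the product structure your final step needs. A correct proof must construct a surjection with the strong property $|g\pi_A^{-1}\pi_A x|>\mathfrak{m}$ for all $A\in[\tau]^{\le\mathfrak{m}}$ --- which is what the cited result in \cite{bop} provides --- rather than verify non-collapsing for an arbitrary one.
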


\begin{proof} By Theorem 2.18 and Theorem 11 in \cite{bop}.
\end{proof}

In  (\cite{bop}, Theorem 7), it is proved that a product
$X=\prod\{X_{\alpha} : \alpha<\tau\}$ of non-trivial compacta
$X_{\alpha}$ is a strictly $a_{\tau}$-space.

\medskip

If we require that every $X_{\alpha}$ be a subcompact space, we
get the following proposition.

\begin{proposition} The product $X=\prod\{X_{\alpha} :
\alpha<\tau\}$ of non-trivial subcompact spaces $X_{\alpha}$ is
strictly $a_{\tau}$-subcompact.
\end{proposition}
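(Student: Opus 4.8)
The plan is to reduce the statement to the structure theorem for products of compacta quoted just above (\cite{bop}, Theorem 7) together with Proposition \ref{pr21}. First I would exploit the subcompactness of each factor: for every $\alpha<\tau$ there is, by definition, a condensation $g_{\alpha}: X_{\alpha}\rightarrow K_{\alpha}$ onto a compactum $K_{\alpha}$. Since $g_{\alpha}$ is a bijection and $X_{\alpha}$ is non-trivial, $|K_{\alpha}|=|X_{\alpha}|>1$, so each $K_{\alpha}$ is a \emph{non-trivial} compactum. This non-triviality is exactly what Theorem 7 requires of its factors, so it is worth recording explicitly.

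Next I would assemble the factorwise condensations into the product map $g=\prod_{\alpha<\tau} g_{\alpha}: X\rightarrow K:=\prod\{K_{\alpha}:\alpha<\tau\}$, defined by $g\bigl((x_{\alpha})_{\alpha<\tau}\bigr)=(g_{\alpha}(x_{\alpha}))_{\alpha<\tau}$, and verify that $g$ is itself a condensation. Continuity follows from the universal property of the product topology, since for each coordinate the composite $\pi_{\alpha}\circ g=g_{\alpha}\circ\pi_{\alpha}$ is continuous. Injectivity and surjectivity are checked coordinatewise: two distinct points of $X$ differ in some coordinate, where the injective $g_{\alpha}$ keeps them apart, and any point of $K$ is hit because each $g_{\alpha}$ is onto $K_{\alpha}$. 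Hence $g$ is a continuous bijection, i.e. a condensation of $X$ onto $K$.

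Having produced such a $g$, I would invoke the structure theorem: $K=\prod\{K_{\alpha}:\alpha<\tau\}$ is a product of $\tau$ non-trivial compacta, so by (\cite{bop}, Theorem 7) the space $K$ is a strictly $a_{\tau}$-space. Thus $X$ admits a condensation onto a strictly $a_{\tau}$-space, and Proposition \ref{pr21} then yields at once that $X$ is strictly $a_{\tau}$-subcompact, which is the desired conclusion.

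There is no deep obstacle here; the content of the proposition is essentially the bookkeeping that condensations are closed under arbitrary products. The only point demanding a line of care is the verification that the product map $g$ is again a condensation (continuity via the universal property and bijectivity coordinatewise) and that the target factors $K_{\alpha}$ remain non-trivial, since otherwise Theorem 7 would not be applicable. Once these routine checks are in place, the result is immediate from the two cited statements.
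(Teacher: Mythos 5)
Your proof is correct and follows essentially the same route as the paper's: condense each factor onto a compactum, observe that the product of these condensations is a condensation of $X$ onto the product of the compacta, apply Theorem 7 of \cite{bop} to conclude that this product is a strictly $a_{\tau}$-space, and finish with Proposition \ref{pr21}. The paper's proof is just a terser version of yours, leaving the coordinatewise verification and the non-triviality of the $K_{\alpha}$ implicit.
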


\begin{proof} Since $X_{\alpha}$ admits a condensation onto
a compact space $K_{\alpha}$ for each $\alpha<\tau$, it is
sufficient to note that $X$ admits a condensation onto the compact
space $K=\prod\{K_{\alpha} : \alpha<\tau\}$. By Theorem 7 in
\cite{bop}, $K$ is a strictly $a_{\tau}$-space. By Proposition
\ref{pr21}, $X$ is a strictly $a_{\tau}$-subcompact space.
\end{proof}

In particular, we obtain that an infinite product of non-trivial
subcompacts is a strictly $a$-subcompact space.

\medskip

Note that a continuous image of an infinite product of non-trivial
subcompact spaces may be not subcompact.

\medskip

\begin{example} Let $X=\mathbb{N}\times \prod \{D_{\alpha} :
\alpha<\omega_1\}$, where $D_{\alpha}=\{0,1\}$ is discrete for
each $\alpha<\omega_1$.
\end{example}

Consider a condensation $f: X \rightarrow Y$ where
$Y=\mathbb{Q}\times \prod \{D_{\alpha} : \alpha<\omega_1\}$. The
space $Y$ can be represented in the form: $Y=\bigcup \{
\{p\}\times C : p\in \mathbb{Q} \}$ where $C=\prod \{D_{\alpha} :
\alpha<\omega_1\}$ is the Cantor cube. Thus, $Y$ is the countable
sum of compacta with an empty interior ($Y$ is a space of the
first category) and, hence, $Y$ cannot be condensed onto a
compactum (a compact space is a space of the second category).

\medskip

 In (\cite{bop}, Theorem 5), it is proved that the product of a compact space and a metrizable compact space without isolated points is
 a strictly $a$- space.

\begin{proposition} If $X$ admits a condensation onto a metrizable
compact space without isolated points and $Y$ is subcompact, then
$X\times Y$ is a strictly $a$-subcompact space.
\end{proposition}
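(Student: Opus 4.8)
The plan is to reduce the statement to two results already available in the excerpt: Theorem 5 of \cite{bop}, which says that the product of a compact space and a metrizable compact space without isolated points is a strictly $a$-space, and Proposition \ref{pr21}, which passes from ``condensation onto a strictly $a_{\tau}$-space'' to ``strictly $a_{\tau}$-subcompact''. First I would record the two condensations supplied by the hypotheses: there is a condensation $f: X \rightarrow M$ onto a metrizable compact space $M$ without isolated points, and, since $Y$ is subcompact by assumption, there is a condensation $g: Y \rightarrow K$ onto a compactum $K$.

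Next I would form the product map $f\times g: X\times Y \rightarrow M\times K$, defined by $(x,y)\mapsto (f(x),g(y))$. This map is continuous, being a product of continuous maps, and bijective, being a product of bijections; hence it is a condensation of $X\times Y$ onto $M\times K$. The point of this step is simply that the class of condensations is closed under taking products.

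Now $M\times K$ is exactly a product of a metrizable compact space without isolated points (namely $M$) and a compact space (namely $K$). By Theorem 5 of \cite{bop}, such a product is a strictly $a$-space. Consequently $X\times Y$ admits a condensation onto a strictly $a$-space, and Proposition \ref{pr21} applied with $\tau=\aleph_0$ yields immediately that $X\times Y$ is a strictly $a$-subcompact space, as required.

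There is no genuinely hard step here; the argument is a short composition of two ready-made results, and the only points deserving a moment's care are the two verifications just flagged: that the product of the two given condensations is again a condensation, and that the factors $M$ and $K$ match the hypotheses of Theorem 5 in \cite{bop} precisely, with $M$ metrizable, compact and without isolated points while $K$ need only be compact. Once these are confirmed the conclusion is immediate, so the main effort is recognizing the correct packaging rather than proving anything new.
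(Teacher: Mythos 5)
Your proof is correct and is essentially identical to the paper's: both form the product condensation $(x,y)\mapsto(\varphi(x),\psi(y))$ onto the product of the two image compacta, invoke Theorem 5 of \cite{bop} to conclude that this product is a strictly $a$-space, and finish with Proposition \ref{pr21}. No gaps and no meaningful difference in route.
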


\begin{proof} Let $\varphi: X\rightarrow X_1$ be a
condensation from $X$ onto a metrizable compact space $X_1$
without isolated points and suppose $\psi: Y \rightarrow Y_1$ is a
condensation from $Y$ onto a compact space $Y_1$. $f:X\times Y
\rightarrow X_1\times Y_1$ where $f(x,y)=(\varphi(x),\psi(y))$ is
a condensation from $X\times Y$ onto a compact space $X_1\times
Y_1$ without isolated points. By Theorem 5 in  \cite{bop},
$X_1\times Y_1$ is a strictly $a$-space. By Proposition
\ref{pr21}, $X\times Y$ is a strictly $a$-subcompact space.
\end{proof}

{\bf Question 2.} Will the product of a (metrizable) compact space
and a strictly $a_{\tau}$-subcompact space be a strictly
$a_{\tau}$-subcompact space?

\medskip

 In \cite{bop} (Proposition 4), it is proved the following result.

\begin{proposition} There are ordered compacta $X$ and $Y$ such
that $X\times Y$ is not an $a$-space.
\end{proposition}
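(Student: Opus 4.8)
The plan is to unwind the definition: a compact space is an $a$-space iff \emph{every} countable deletion condenses onto a compactum, so to prove the negative statement it suffices to exhibit a single countable set $C\subseteq X\times Y$ for which $(X\times Y)\setminus C$ admits no condensation onto a compactum. A first observation guides the choice of factors: by Raukhvarger's theorem \cite{raut} a metrizable compactum minus a countable set is always subcompact, so if every countable subset of $X\times Y$ had metrizable closure, the deletions would stay subcompact. Hence the factors must be chosen \emph{separable and non-metrizable}, so that a countable set can be dense and have a non-metrizable closure. The natural candidate is to take $X$ and $Y$ to be the double arrow (two arrows) space $\mathbb{A}$, a separable, first countable, zero-dimensional, perfectly normal compact LOTS. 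Each factor is individually well behaved, so the example also witnesses that passing to a product can destroy the $a$-property.

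First I would exploit the fact that, although $\mathbb{A}$ is hereditarily Lindel\"of and perfectly normal, the square $\mathbb{A}\times\mathbb{A}$ is neither: it carries an uncountable discrete subspace along an ``anti-diagonal''. The idea is to use this pathology, which is invisible in either factor, to locate a countable $C$ (a countable dense subset of a copy of $\mathbb{A}$ sitting transversally in the square) so that either $(X\times Y)\setminus C$ contains a closed copy of a space already known to be non-subcompact, or any condensation $f\colon (X\times Y)\setminus C\to K$ onto a compactum can be driven to a contradiction. The most promising mechanism is the injectivity argument used in case~(2) of the proof that an uncountable $\sum_{\tau}$-product of non-trivial compacta is not subcompact: one would show that such an $f$ extends over the separable compactum $X\times Y$ and thereby produces a \emph{proper} compact subset $Z\subsetneq (X\times Y)\setminus C$ with $f(Z)=K$, which is impossible because $f$ is a bijection.

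The hard part is the construction together with the verification of non-subcompactness, and it is hard precisely because the two generic obstructions available elsewhere in this paper are unavailable here. The $\sigma$-compact / first-category obstruction (as in the theorem that infinite $\sigma$-products are not subcompact, and in the example built from $\mathbb{N}\times\prod\{D_{\alpha}\}$) cannot apply: deleting only countably many points from a crowded compactum leaves a \emph{Baire} space, which can never be written as a countable union of nowhere dense compacta. Likewise the extremally disconnected route through Theorem~\ref{iliad}, used in the construction of an almost strictly $a_{\tau}$-subcompact space that is not $a_{\tau}$-subcompact by peeling off a compact clopen complement and invoking Parhomenko's theorem \cite{parh2}, cannot apply either: a clopen subset of a product of ordered compacta is a finite union of rectangles and hence is never an extremally disconnected space without isolated points. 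Consequently the proof must rest on a bespoke obstruction tailored to the combinatorics of the chosen ordered compacta; assembling this obstruction, and checking that it genuinely obstructs subcompactness of the whole complement $(X\times Y)\setminus C$ rather than merely living on a closed subspace, is the main obstacle.
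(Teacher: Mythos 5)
The paper does not actually prove this proposition: it is quoted from \cite{bop} (Proposition 4), so there is no internal argument to compare against. Judged on its own terms, your proposal fails, and the failure is concrete. Your candidate example cannot work: the double arrow space $\mathbb{A}$ is a zero-dimensional, first countable compactum, and both properties are preserved under finite products, so $\mathbb{A}\times\mathbb{A}$ is again a zero-dimensional, first countable compactum. By Belugin's theorem, cited in the Introduction of this very paper (\cite{Belug1}: for a zero-dimensional first countable compactum, the complement of \emph{any} countable set condenses onto a compactum), the square $\mathbb{A}\times\mathbb{A}$ \emph{is} an $a$-space. Hence no countable set $C$ with the property you seek exists, and the pathology you invoke (failure of hereditary Lindel\"ofness, the uncountable discrete set in the square) is simply not an obstruction to being an $a$-space. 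Any correct pair of ordered compacta must produce a product lying outside all the classes covered by the positive theorems quoted in the Introduction: not a product of metrizable compacta, not (weakly) dyadic, and not a zero-dimensional first countable compactum --- the double arrow square violates the last of these. Your guiding heuristic (that the factors should be separable and non-metrizable) is precisely what steered you into this trap.

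Even setting aside the choice of factors, what you have written is a research plan, not a proof. The countable set $C$ is never constructed; the proposed mechanism --- that any condensation $f\colon (X\times Y)\setminus C\to K$ ``extends over the separable compactum $X\times Y$'' --- is unjustified, since in the $\Sigma$-product argument you point to (case (2)) the extension comes from the special identity $\beta Y=\prod\{X_{\beta(\alpha)}:\alpha<\tau^{+}\}$, and nothing analogous holds for a dense-in-itself compactum with a countable set deleted; and you explicitly defer ``assembling the obstruction'' as ``the main obstacle.'' For an existence statement, a proof must exhibit the witnesses ($X$, $Y$, and $C$) and verify non-subcompactness of $(X\times Y)\setminus C$; none of these three items is actually supplied here.
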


{\bf Question 3.} Suppose that a strictly $a_{\tau}$-space $Z$ is
represented as $Z=X\times Y$, where $\tau\geq\aleph_0$. Will at
least one of the multipliers be a strictly  $a_{\tau}$-space?

\medskip
\textbf{Acknowledgement} The authors would like to thank the
referee for careful reading and valuable comments and suggestions.
The work was performed as part of research conducted in the Ural
Mathematical Center.

\bibliographystyle{model1a-num-names}
\bibliography{<your-bib-database>}

\begin{thebibliography}{10}

\bibitem{alex}
P.S. Aleksandrov, \textit{On some basic directions in general
topology}, Uspekhi Mat. Nauk, {\bf 19}:6, (120), (1964), 3--46 (in
Russian); P.S. Aleksandrov, \textit{On some basic directions in
general topology}, Russian Math. Surveys, {\bf 19}:6, (1964),
1--39 (in English). Erratum. P.S. Aleksandrov, Corrections to the
article \textit{On some basic directions in general topology},
Uspekhi Mat. Nauk, 1965, {\bf 20}:1, (121), 253--254.


\bibitem{AlexUrys}
P.S. Aleksandrov and P.S. Uryson, \textit{Memoir on compact
topological spaces}, 3rd ed. Nauka, Moscow., 1971; MR, 51 1951,
6719; See also P.S. Aleksandrov, "On compact topological spaces",
Works on topology and other fields of mathematics, 2, P.S. Uryson,
GITTL, Moscow; P.S. Aleksandrov and P.S. Uryson, MR, 14-12.

\bibitem{alexhopf}
P.S. Alexandrov, H. Hopf, \textit{Topologie I}, Berlin, 1935.

\bibitem{arh1}
A.V. Arhangel'skii, \textit{On condensations of $C_p$-spaces onto
compacta}. Proc. Am. Math. Soc., {\bf 128}, (2000), 1881-1883.

\bibitem{arh2}
A.V. Arhangel'skii, O. Pavlov, \textit{A note on condensations of
$C_p(X)$ onto compacta}. Comment. Math. Univ. Carol., {\bf 43},
(2002), 485--492.






\bibitem{ArchPonom}
A.V. Arhangel'skii, V.I. Ponomarev, \textit{Basics of general
topology in problems and exercises}, M. Science, 1974, 424 p. (in
Russian). (A.V. Arkhangel'skii, V.I. Ponomarev,
\textit{Fundamentals of General Topology. Problems and Exercises}.
Mathematics and Its Applications {\bf 13}, Springer Netherlands,
1984).







\bibitem{banach}
S. Banach, \textit{Problem 26}, Colloq. Math. {\bf 1}, (1948),
150.


\bibitem{bell}
M.G. Bell, \textit{Generalized dyadic spaces}, Fundamenta
Mathematicae CXXXV (1985), 47--58.

\bibitem{bel1}
 V.K. Bel'nov, \textit{Compressions onto compacta}, Dokl. Akad. Nauk SSSR, {\bf 193}:3, (1970),
 506-509. (in Russian)



\bibitem{Belug0}
V.I. Belugin, \textit{Contractions onto bicompact}, Dokl.Akad.
Nauk SSSR, {\bf 207}:2, (1972), 259--261.(in Russian)

\bibitem{Belug1}
V.I. Belugin, \textit{Contractions onto bicompact}, Dokl. Akad.
Nauk Bolg., {\bf28}:11, (1975), 1447--1449.




\bibitem{bop}
V.I. Belugin, A.V. Osipov, E.G. Pytkeev, \textit{On classes of
subcompact spaces}, Math. Notes.


\bibitem{chjm}
W.W. Comfort, A.W. Hager, J. van Mill, \textit{Compact
condensations and compactifications}. Topology and its
Applications. {\bf 259}, (2019), 67--79.

\bibitem{cor}
H.H. Corson, \textit{Normality in subsets of product spaces}.
Amer. J. Math., {\bf 81}, (1959), 785--796.






\bibitem{enge}
R. Engelking, \textit{General Topology}, PWN, Warsaw, (1977); Mir,
Moscow, (1986).





\bibitem{kat}
M. Katetov, \textit{$H$-closed extensions of topological spaces}.
$\check{C}$asop. Math., fys., {\bf 69}, (1940), 36--39.


\bibitem{kt}
W. Kulpa, M. Turza$\acute{n}$ski, \textit{Bijections onto compact
spaces}. Acta Universitatis Carolinae. Mathematica et Physica,
{\bf 29}:2 (1988), 43--49.


\bibitem{iliad}
S. Iliadis, \textit{Some properties of absolutes}, Dokl. Akad.
Nauk SSSR, {\bf 152}:4 (1963), 798--800. (in Russian)


\bibitem{mor}
R.L. Moore, \textit{Foundations of point set theory}, AMS
Colloquium Publ. XIII, N.Y. 1932.



\bibitem{mrow}
S. Mr$\acute{o}$wka, \textit{Mazur theorem and $m$-adic spaces}.
Bull. Acad. Polon. Sci. S$\acute{e}$r. Sci. Math. Astronom. Phys.
{\bf 18}, (1970), 299--305.

\bibitem{osipyt}
A.V. Osipov, E.G. Pytkeev, \textit{On the problem of condensation
onto compacta}, Dokl. Akad. Nauk, {\bf 488:2}, (2019), 130--132.


\bibitem{parh1}
 A.S. Parhomenko, \textit{\"{U}ber eineindeutige stetige Abbildungen}, Rec. Math. [Mat. Sbornik] N.S., {\bf 5(47)}:1, (1939), 197--210.

\bibitem{parh2}
A.S. Parhomenko, \textit{\"{U}ber eineindeutige stetige
Abbildungen auf kompakte Raume}, Izv. Akad. Nauk SSSR Ser. Mat.,
{\bf 5}:3 (1941), 225-232.


\bibitem{powo}
J. R. Porter, R. G. Woods, \textit{Extensions and absolutes of
Hausdorff spaces}, Springer-Verlag, 1988.



\bibitem{proiz}
V.V. Proizvolov, \textit{On one-to-one continuous mappings of
topological spaces}, Mat. Sb. (N.S.), {\bf 68(110)}:3 (1965),
417--431.



\bibitem{pytk}
E.G. Pytkeev, \textit{Upper bounds of topologies}, Math. Notes,
{\bf 20}:4 (1976), 831-837.



\bibitem{Pytkeev3}
E.G. Pytkeev, \textit{On the theory of condensations onto compact
metric spaces}, Dokl. Akad. Nauk SSSR, {\bf 233}:6 (1977),
1046--1048.


\bibitem{Pytkeev1}
E.G. Pytkeev, \textit{Hereditarily plumed spaces}, Math. Notes,
{\bf 28}:4 (1980), 761--769.


\bibitem{Pytkeev2}
E.G. Pytkeev, \textit{On condensations onto compact Hausdorff
spaces}, Dokl. Akad. Nauk SSSR, {\bf 265}:4 (1982), 819--823.


\bibitem{Pytkeev51}
E.G. Pytkeev, \textit{On the theory of one-to-one continuous
mappings}. Research on modern analysis. Sverdlovsk. UrGU. Math.
Notes. {\bf 10}:2, (1977), 121--132.

\bibitem{raut}
I.L. Raukhvarger, \textit{On condensations into compacts}. Dokl.
Akad. Nauk SSSR,, {\bf 66}:13, (1949), 13--15.

\bibitem{rit}
H. Reiter, \textit{Spaces with compact subtopologies}. Rocky Mt.
J. Math., {\bf 2}, (1972), 239--247.

\bibitem{Serp}
W. Sierpi$\acute{n}$ski, \text{Un th$\acute{e}$or$\grave{e}$me sur
les continus}. Tohoru Math. J., {\bf 13}:3, (1918), 300--303.


\bibitem{smir}
Y.M. Smirnov, \textit{Condensations onto bicompacts and connection
with bicompact extensions and with retraction}. Fundam. Math.,
{\bf 63}:2, (1968), 199--211. (in Russian)

\bibitem{Fedor}
V.V. Fedorchuk, \textit{Bicompacta with noncoinciding
dimensionalities}, Dokl. Akad. Nauk SSSR, {\bf 182}:2 (1968),
275--277.


\bibitem{hedg1}
N. Hadzhiivanov, \textit{Extension of mappings into spheres and P.
S. Aleksandrov's problem of bicompact compressions}, Dokl. Akad.
Nauk SSSR, {\bf 194}:3 (1970), 525--527.





\bibitem{enc}
K.P.Hart, Jun-iti Nagata, J.E.Vaughan, \textit{Encyclopedia of
General Topology}, Elsevier Science, 2003, 536 p. (Y.Tanaka,
\textit{b-4- Quotient Spaces and Decompositions}), 43--46.











\end{thebibliography}







\end{document}